 \newtheorem{theorem}{Theorem}
\newtheorem{lemma}[theorem]{Lemma}
\newcommand{\ol}{\overline}
\renewcommand{\labelenumi}{(\alph{enumi})}
\title{Combinatorics of intervals in the plane I: trapezoids}
\author{Daniel Di Benedetto\thanks{dibenedetto@math.ubc.ca} \qquad J\'ozsef Solymosi\thanks{solymosi@math.ubc.ca} \qquad Ethan White\thanks{epwhite@math.ubc.ca}\\
Department of Mathematics\\
The University of British Columbia \\
Vancouver, BC\\
 Canada V6T 1Z2}
\begin{document}

\maketitle

\abstract{We study arrangements of intervals in $\mathbb{R}^2$ for which many pairs form trapezoids. We show that any set of intervals forming many trapezoids must have underlying algebraic structure, which we characterise. This leads to some unexpected examples of sets of intervals forming many trapezoids, where an important role is played by degree 2 curves.}

\section{Introduction}

The combinatorics of points in the plane has received much attention, as has the combinatorics of other geometric and algebraic objects such as lines and curves. A typical problem involves estimating the number of possible occurrences of a given combinatorial or geometric relation among all arrangements of the object of study, and beyond this understanding the structure of the extremal arrangements.

Line segments have also been investigated in various areas of discrete and computational geometry, for example in \cite{CE, PS2, T, WS} -- see also \cite{PS1} for more examples -- with research focusing on properties such as intersections and visibility. We will study the combinatorics of some geometric properties of intervals. This is the first in a series of papers on geometric configurations of intervals; in this first part, we consider arrangements for which there are many pairs forming trapezoids.  \\


An \emph{interval} in $\mathbb{R}^2$ is a directed line segment. We will denote an interval by a four-tuple $(a,b;c,d) \in \mathbb{R}^4$, where $(a,b),(c,d) \in \mathbb{R}^2$ denotes the coordinates of the initial and terminal point, respectively. We require that $(a,b) \neq (c,d)$, so the interval has positive length. We call the interval $(c,d;a,b)$ the \emph{reverse} of the interval $(a,b;c,d)$. Consider a pair of distinct intervals, $(a,b;c,d)$ and $(a',b';c',d')$. We say that this pair \emph{forms a trapezoid} if the convex hull of the two intervals is a trapezoid. Arithmetically, this occurs if
\begin{equation}\label{para}(a-a')(d-d')=(b-b')(c-c')  \quad \text{or} \quad (a-c')(d-b')=(c-a')(b-d'),\end{equation}
but note that these equations allow some acceptable degenerate cases, namely where the two intervals lie on the same line, or where they share an endpoint. Note also that both of the above equations can be satisfied simultaneously, as in the case when the two intervals form the diagonals of a parallelogram. All of these scenarios are illustrated in Figure~\ref{pep}.\\

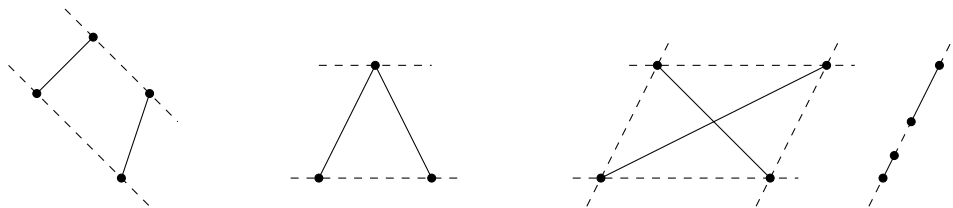
\begin{figure}[h!]
\begin{center}
\begin{tikzpicture}[scale=0.75]

\begin{scope}[yshift = 15mm]
\filldraw

(0,0) circle (2pt)
(1,1) circle (2pt)
(2,0) circle (2pt)
(1.5,-1.5) circle (2pt);

\draw

(0,0)--(1,1)
(1.5,-1.5)--(2,0);

\draw[dashed]

(-0.5,0.5)--(2,-2)
(0.5,1.5)--(2.5,-0.5);

\end{scope}

 \begin{scope}[xshift=50mm]

\filldraw
(0,0) circle (2pt)
(2,0) circle (2pt)
(1,2) circle (2pt);
\draw
(0,0)--(1,2)--(2,0);
\draw[dashed]
(0,2)--(2,2)
(-0.5,0)--(2.5,0);

  \end{scope}
  
   \begin{scope}[xshift=100mm]
   
   \filldraw

(0,0) circle(2pt)
(3,0) circle(2pt)
(4,2) circle(2pt)
(1,2) circle(2pt);

\draw
(0,0)--(4,2)
(1,2)--(3,0);

\draw[dashed]
(-0.25,-0.5)--(1.25,2.5)
(2.75,-0.5)--(4.25,2.5)
(0.5,2)--(4.5,2)
(-0.5,0)--(3.5,0);

    \end{scope}
    
       \begin{scope}[xshift=150mm]
   
   \filldraw

(0,0) circle(2pt)
(0.2,0.4) circle(2pt)
(0.5,1) circle(2pt)
(1,2) circle(2pt);

\draw
(0,0)--(0.2,0.4)
(0.5,1)--(1,2);

\draw[dashed]
(-0.25,-0.5)--(0,0)
(0.2,0.4)--(0.5,1)
(1,2)--(1.25,2.5);
   
    \end{scope}

\end{tikzpicture}
\end{center}
\caption{Intervals forming trapezoids: four cases} \label{pep}
\end{figure}

Given a set of intervals in the plane, a natural combinatorial property of the set is the number of pairs of intervals forming trapezoids. Clearly there are arrangements of intervals for which every pair forms a trapezoid -- the most obvious example is given by placing all endpoints along two parallel lines -- and one can ask if all arrangements with many pairs forming trapezoids must have some shared underlying structure. We answer this question, showing that if the number of pairs forming trapezoids is above a certain threshold, then many of the intervals must be arranged in one of essentially three ways. 

\begin{theorem}\label{maint} Let $\mathfrak{I}$ be a set of $N$ distinct intervals in $\mathbb{R}^2$. If more than $N^{3/2}\log N$ pairs of intervals form trapezoids then one of the following holds. 
\begin{enumerate}[1.]
\item There are two parallel lines in $\mathbb{R}^2$ such that $\gtrsim N^{1/2}$ intervals have an endpoint on each line.
\item There are two parallel lines $\ell_1,\ell_2 \subset \mathbb{R}^2$ such that $\gtrsim N^{1/2}$ intervals $(a,b;c,d) \in \mathfrak{I}$ satisfy $(a,c) \in \ell_1$ and $(b,d) \in \ell_2$.
\item There are two subsets $\mathfrak{I}_1,\mathfrak{I}_2 \subset \mathfrak{I}$ such that for any $i_1 \in \mathfrak{I}_1$ and any $i_2 \in \mathfrak{I}_2$, the intervals $i_1,i_2$ form a trapezoid. In addition, $|\mathfrak{I}_1||\mathfrak{I}_2| \gtrsim N$. 

\end{enumerate}
\end{theorem}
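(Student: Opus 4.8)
The plan is to lift each interval to a line in $\mathbb{R}^3$ so that forming a trapezoid becomes, up to easily-controlled exceptions, the condition that the corresponding two lines meet in space, and then to feed the resulting family of lines into an incidence bound for lines in $\mathbb{R}^3$. After a generic rotation of the plane, which affects neither the hypothesis nor any of the three conclusions, we may assume that no $N^{1/2}$ of the initial points share a first coordinate, and likewise for the terminal points; this is only a convenience that kills certain degeneracies in the lift.

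To the interval $(a,b;c,d)$ I associate the line $\Lambda=\{(t,\,b-at,\,d-ct):t\in\mathbb{R}\}\subset\mathbb{R}^3$. A direct calculation shows that two intervals $I,I'$ satisfy the first equation of \eqref{para} exactly when $\Lambda_I$ and $\Lambda_{I'}$ meet or are parallel, where parallelism forces the initial points of $I$ and $I'$ to share a first coordinate and likewise the terminal points; and $I,I'$ satisfy the second equation of \eqref{para} exactly when $\Lambda_I$ meets or is parallel to $\Lambda_{\ol{I'}}$, where $\ol{I'}$ is the reverse of $I'$. By the normalisation of the previous paragraph the set of exceptional (parallel) pairs has size $O(N^{3/2})$, so it can be discarded; hence every remaining trapezoid-forming pair produces an intersecting pair in the family $\mathcal{L}=\{\Lambda_I:I\in\mathfrak{I}\}\cup\{\Lambda_{\ol I}:I\in\mathfrak{I}\}$ of at most $2N$ lines, and therefore $\mathcal{L}$ has $\gtrsim N^{3/2}\log N$ intersecting pairs.

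The next step is an incidence bound of Guth--Katz type: if $M$ lines in $\mathbb{R}^3$ are such that no plane, no point, and no regulus contains as many as $M^{1/2}$ of them, then the number of intersecting pairs is $O(M^{3/2}\log M)$ --- the logarithm entering when one dyadically sums $\binom{k}{2}$ over the points lying on $k$ of the lines, using the estimate $O(M^{3/2}k^{-2}+Mk^{-1})$ for the number of such points together with $k\lesssim M^{1/2}$. Applied to $\mathcal{L}$ with $M\le 2N$, this forces a plane, a point, or a regulus containing $\gtrsim N^{1/2}$ of the lines of $\mathcal{L}$; since the reflection $(t,x,y)\mapsto(t,y,x)$ interchanges $\{\Lambda_I\}$ and $\{\Lambda_{\ol I}\}$, we may assume the special lines are of the form $\Lambda_I$. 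A common point $(t_0,y_0,z_0)$ puts every corresponding initial point on the line $y=t_0x+y_0$ and every terminal point on the parallel line $y=t_0x+z_0$, which is conclusion~1 (or conclusion~3 with $\mathfrak{I}_1=\mathfrak{I}_2$ if the two lines coincide). A common plane puts the points $(a_I,c_I)$ on one line and the points $(b_I,d_I)$ on a parallel line, which is conclusion~2.

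The regulus case is the crux, and the place where the degree-$2$ curves advertised in the abstract appear: $\gtrsim N^{1/2}$ of the $\Lambda_I$ lie in one ruling of a quadric $Q$, equivalently the parameter points $(a_I,b_I,c_I,d_I)\in\mathbb{R}^4$ lie on a fixed degree-$2$ curve. Since lines of a single ruling are pairwise skew, this does not on its own give conclusion~3; the relevant mechanism is that the \emph{second} ruling of $Q$ meets the first in a complete bipartite pattern of intersecting lines. To exploit this I would use a structural refinement of the incidence bound that charges the $\gtrsim N^{3/2}\log N$ intersecting pairs to a bounded number of planes, points, and quadrics: a plane or a point carrying $\gtrsim N$ of the pairs gives conclusions~2 or~1 exactly as above, while a quadric $Q$ carrying $\gtrsim N$ of them must have \emph{both} rulings richly populated by lines of $\mathcal{L}$, and a pigeonholing step back to the intervals then produces subsets $\mathfrak{I}_1,\mathfrak{I}_2$ with $|\mathfrak{I}_1||\mathfrak{I}_2|\gtrsim N$ all of whose cross-pairs form trapezoids. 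I expect the work in this case --- separating genuine reguli from degenerate quadrics such as cones and pairs of planes (which collapse back to the concurrent and coplanar cases), and handling the reflection symmetry and the pigeonholing cleanly --- to be the main obstacle; by contrast the generic-rotation normalisation, the discarded exceptional pairs, and the adjustment of constants so that the stated threshold $N^{3/2}\log N$ is actually met should all be routine.
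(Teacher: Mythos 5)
Your proposal is correct and follows essentially the same route as the paper: the same lift of intervals to lines in $\mathbb{R}^3$ (up to a permutation of coordinates), doubling the family by including reverses, a generic rotation to dispose of parallel-line degeneracies, and the structured (dyadically summed) form of Guth--Katz whose regulus alternative carries $\gtrsim N$ intersecting pairs rather than merely $\gtrsim N^{1/2}$ lines. The refinement you flag as the crux of the regulus case is exactly the form in which the paper states the Guth--Katz corollary, and the bipartite-ruling pigeonholing you sketch is precisely how the paper concludes $|\mathfrak{I}_1||\mathfrak{I}_2|\gtrsim N$.
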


Each of these three cases gives rise to a different geometric structure on the intervals. For examples of each of these, see Figure~\ref{case1}, Figure~\ref{case2} and Figure~\ref{shiftedhyper}, respectively. 

\begin{figure}[h!]
\begin{center}
\begin{tikzpicture}[scale=1]

\filldraw

(0,0) circle (2pt)
(1,0.25) circle (2pt)
(2.5,0.625) circle (2pt)
(2.75,0.6875) circle (2pt)
(4,1) circle (2pt)
(5.5,1.375) circle (2pt)
(6,1.5) circle (2pt)
(6.5,1.625) circle (2pt)
(7.5,1.875) circle (2pt)
( 0.2 , 2.05 ) circle (2pt)
( 1.5 , 2.375 ) circle (2pt)
( 2 , 2.5 ) circle (2pt)
( 4 , 3.0 ) circle (2pt)
( 4.6 , 3.15 ) circle (2pt)
( 5.4 , 3.35 ) circle (2pt)
( 6.7 , 3.675 ) circle (2pt)
( 7 , 3.75 ) circle (2pt)
( 7.3 , 3.825 ) circle (2pt);

\draw
(0,0)--( 4.6 , 3.15 ) 
(2.5,0.625)--( 1.5 , 2.375 )
(4,1) --( 4 , 3.0 )
(1,0.25)--( 0.2 , 2.05 )
(6.5,1.625)--( 2 , 2.5 )
(5.5,1.375)--( 6.7 , 3.675 )
(2.75,0.6875)--( 5.4 , 3.35 )
(6,1.5)--( 7.3 , 3.825 )
(7.5,1.875)--( 7 , 3.75 );

\draw[dashed]
(-0.25,-0.0625)--(8,2)
(-0.25,1.9375)--(8,4);

\end{tikzpicture}
\end{center}
\caption{Case 1} \label{case1}
\end{figure}

\begin{figure}
\begin{center}
\begin{tikzpicture}

\filldraw
(0,0) circle (2pt)
(1,0.5) circle (2pt)
(0.3,2) circle (2pt)
(1.3,2.5) circle (2pt)
(0.7,0.8) circle (2pt)
(1.7,1.3) circle (2pt)
(1.9,1.5) circle (2pt)
(2.9,2) circle (2pt)
(2.6,0.35) circle (2pt)
(3.6,0.85) circle (2pt)
(3.5,1.7) circle (2pt)
(4.5,2.2) circle (2pt)
(4.2,0.3) circle (2pt)
(5.2,0.8) circle (2pt);

\draw
(0,0)--(1,0.5)
(0.3,2)--(1.3,2.5)
(0.7,0.8)--(1.7,1.3)
(1.9,1.5)--(2.9,2)
(2.6,0.35)--(3.6,0.85)
(3.5,1.7)--(4.5,2.2)
(4.2,0.3)--(5.2,0.8);

\draw[dashed]
(2.6,0.35)--(1.9,1.5)
(3.6,0.85)--(2.9,2);

\end{tikzpicture}
\end{center}
\caption{Case 2}
\label{case2}
\end{figure}

\begin{figure}[h!]
\begin{center}
\begin{tikzpicture}[scale=0.8]

\filldraw[blue]

( 2.161209223472559 , 4.763546581352072 )--( 1.682941969615793 , 0.30116867893975674 )
( -1.6645873461885696 , 2.9863011805570787 )--( 1.8185948536513634 , 1.325444263372824 )
( -3.9599699864017817 , 0.30225502291884365 )--( 0.2822400161197344 , 1.1311125046603125 )
( -2.6145744834544478 , -0.8208922323430805 )--( -1.5136049906158564 , -0.10315887444431626 )
( 1.1346487418529052 , 0.6494758216001757 )--( -1.917848549326277 , -1.2425864601263648 )
( 3.8406811466014643 , 3.3615095769028804 )--( -0.5588309963978517 , -1.239585784849292 )
( 3.0156090173732184 , 4.8217777061241875 )--( 1.3139731974375781 , -0.09691565562451554 )
( -0.5820001352344542 , 3.6877164256295365 )--( 1.9787164932467636 , 1.1348582804319953 )
( -3.6445210475387078 , 1.0019764467141594 )--( 0.8242369704835132 , 1.3232487471264336 )
( -3.3562861163058098 , -0.7661852799316446 )--( -1.0880422217787398 , 0.29505041818708255 )
( 0.017702791952203145 , 0.008870982874694677 )--( -1.999980413101407 , -1.0044159045387542 )
( 3.3754158349299686 , 2.6145620814641144 )--( -1.0731458360008699 , -1.380426876732927 )
( 3.6297871258007848 , 4.655227636553674 )--( 0.8403340736532818 , -0.4872797446235553 )
( 0.5469488728313344 , 4.254689147805408 )--( 1.9812147113897407 , 0.8538701374870368 )
( -3.038751651435285 , 1.7811998545965915 )--( 1.3005756803142339 , 1.4099757530159382 )
( -3.8306379212935386 , -0.4911255939768999 )--( -0.5758066333301306 , 0.6697561636583194 )
( -1.1006533522063877 , -0.47312165986230736 )--( -1.9227949837591136 , -0.6862341538279599 )
( 2.6412668329763207 , 1.8186589229448082 )--( -1.501974493543352 , -1.4113039550157562 )
( 3.954818472746677 , 4.277163655699243 )--( 0.2997544193259047 , -0.8388274085237168 )
( 1.6323282472535678 , 4.642054625082039 )--( 1.8258905014552553 , 0.5048631889142356 )
( -2.190917040897074 , 2.577852756623575 )--( 1.673311277072112 , 1.3843848987603244 )
( -3.9998433055785485 , -0.017624271370082045 )--( -0.017702618580807752 , 0.9911095171042332 )
( -2.13133208133359 , -0.7581068490171363 )--( -1.6924408083503413 , -0.3133873838417731 )
( 1.6967160293479877 , 1.0372012906607462 )--( -1.8111567240132478 , -1.3297573693436209 );
\filldraw[red]
( 1.0699953144983494 , 4.46211402808356 )--( -1.927116370834386 , -0.6960593567926057 )
( -2.665104085119296 , 2.1588583817937925 )--( -1.4914104243534405 , -1.4119812334565442 )
( -3.9499190796354595 , -0.29045092810422624 )--( 0.3154913882864964 , -0.8297340757656166 )
( -1.6031966883199018 , -0.6339302176588606 )--( 1.8323318734989098 , 0.5153667646694795 )
( 2.217497344716643 , 1.444213787910519 )--( 1.6645348844478025 , 1.386641778403062 )
( 3.9994345455336604 , 4.033345073735529 )--( -0.033627800968699426 , 0.9830447358990654 )
( 2.1043100695244217 , 4.753028276019339 )--( -1.7008732412571288 , -0.324359103247459 )
( -1.7255073798824831 , 2.941589977571345 )--( -1.8043436675125868 , -1.3335486787269142 )
( -3.968901301810414 , 0.2644581961089165 )--( -0.24890884701412339 , -1.1166797489596652 )
( -2.563305670379973 , -0.8170244547171515 )--( 1.535371619527165 , 0.1268593921685892 )
( 1.1989813731080596 , 0.691452186749852 )--( 1.908038499804178 , 1.253764593179104 )
( 3.858930471546439 , 3.403001653041618 )--( 0.526463582731602 , 1.2279644092524107 )
( 2.970996690814678 , 4.824637869800544 )--( -1.3391395243932047 , 0.07317941050706711 )
( -0.6484577459988731 , 3.64931505554979 )--( -1.9735439285492267 , -1.1488864007743316 )
( -3.6717231216571733 , 0.9576195854326375 )--( -0.7934811462612241 , -1.3146713535449055 )
( -3.319223192282596 , -0.7757161387148566 )--( 1.1161045425735585 , -0.27175352678386977 )
( 0.08495523269458405 , 0.042928754201269737 )--( 1.9995488621460222 , 1.021013239246657 )
( 3.4110262085234924 , 2.6608959250082833 )--( 1.044617179253463 , 1.3750651417576045 )
( 3.601015418989218 , 4.671238430240395 )--( -0.8707307207457864 , 0.46488849437441127 )
( 0.48024766016970694 , 4.225656641756668 )--( -1.9855328116718143 , -0.8727044907934804 )
( -3.082057582634275 , 1.7338164009833403 )--( -1.2748451923004778 , -1.4079369918088076 )
( -3.810733297601074 , -0.5132958664226313 )--( 0.6079292176220942 , -0.6487187155892213 )
( -1.035838392850445 , -0.4496955048973632 )--( 1.9317763084721409 , 0.7069285560234592 )
( 2.691401553273387 , 1.8661436064809067 )--( 1.4795571701557868 , 1.4126289733962403 );

\draw[dashed]
( -3.082057582634275 , 1.7338164009833403 )--( 3.0156090173732184 , 4.8217777061241875 )
( -1.2748451923004778 , -1.4079369918088076 )--( 1.3139731974375781 , -0.09691565562451554 );

\filldraw
( -3.082057582634275 , 1.7338164009833403 ) circle (2.5pt)
( 3.0156090173732184 , 4.8217777061241875 ) circle (2.5pt)
( -1.2748451923004778 , -1.4079369918088076 ) circle (2.5pt)
( 1.3139731974375781 , -0.09691565562451554 ) circle (2.5pt);

\end{tikzpicture}
\end{center}
\caption{Case 3}
\label{shiftedhyper}
\end{figure}

After proving this theorem, we will analyse the geometry of each of the cases. The most interesting case is the third, where we find a class of surprising examples with structure coming from degree 2 curves. Indeed we will see that if the third item holds, then all of the intervals must have endpoints lying on two fixed curves of degree 2. Thus, the underlying structure of a set of intervals forming many trapezoids can be described in the following way.  

\begin{theorem}\label{maint2}
Let $\mathfrak{I}$ be a set of $N$ distinct intervals in $\mathbb{R}^2$. If more than $N^{3/2}\log N$ pairs of intervals form trapezoids then one of the following holds.
\begin{enumerate}[1.]
\item There are two parallel lines in $\mathbb{R}^2$ such that $\gtrsim N^{1/2}$ intervals from $\mathfrak{I}$ have an endpoint on each line.
\item There is a pencil of lines (which may be parallel) such that $\gtrsim N^{1/2}$ intervals from $\mathfrak{I}$ are subsets of lines in the pencil.
\item There are two curves in $\mathbb{R}^2$ of degree $2$ such that $\gtrsim N^{1/2}$ intervals from $\mathfrak{I}$ have an endpoint on each curve.
\end{enumerate}
\end{theorem}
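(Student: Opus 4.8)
The plan is to deduce Theorem~\ref{maint2} from Theorem~\ref{maint} by extracting the geometry hidden in each of that theorem's three conclusions. Conclusion~1 of Theorem~\ref{maint} is already Conclusion~1 of Theorem~\ref{maint2}. For Conclusion~2, write the two parallel lines as $\ell_1=\{(a,c):c=ma+k_1\}$ and $\ell_2=\{(b,d):d=mb+k_2\}$ with common slope $m$ (vertical lines handled separately). An interval $(a,b;c,d)\in\mathfrak I$ with $(a,c)\in\ell_1$ and $(b,d)\in\ell_2$ runs from $(a,b)$ to $(ma+k_1,mb+k_2)$, so its direction vector is $\bigl((m-1)a+k_1,(m-1)b+k_2\bigr)$. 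If $m\neq1$, the vector from the fixed point $P=\bigl(\tfrac{k_1}{1-m},\tfrac{k_2}{1-m}\bigr)$ to $(a,b)$ is a scalar multiple of this direction, so the line carrying the interval passes through $P$; if $m=1$ the direction is the constant vector $(k_1,k_2)$; and if $\ell_1,\ell_2$ are both vertical then every such interval has initial point $(k_1,k_2)$. In each case the $\gtrsim N^{1/2}$ intervals of Conclusion~2 lie in a single pencil of lines, which is Conclusion~2 of Theorem~\ref{maint2}.

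The content lies in Conclusion~3. Colour each pair of $\mathfrak I_1\times\mathfrak I_2$ by which equation of~\eqref{para} it satisfies, breaking ties arbitrarily; one colour class has $\geq\tfrac12|\mathfrak I_1||\mathfrak I_2|\gtrsim N$ pairs, and since reversing every interval of $\mathfrak I_2$ interchanges the two equations of~\eqref{para} we may assume the majority class consists of pairs satisfying the first one. Relabel so $|\mathfrak I_1|\leq|\mathfrak I_2|$, hence $|\mathfrak I_2|\gtrsim N^{1/2}$, and suppose for now that $|\mathfrak I_1|$ exceeds an absolute constant. The bipartite graph of majority-class pairs then has average $\mathfrak I_2$-side degree $\geq\tfrac12|\mathfrak I_1|$, so by convexity some triple $i_1,i_2,i_3\in\mathfrak I_1$ has the property that $\gtrsim|\mathfrak I_2|\gtrsim N^{1/2}$ intervals of $\mathfrak I_2$ each form, with all of $i_1,i_2,i_3$, a pair satisfying the first equation of~\eqref{para}; call this set of intervals $B$.

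Now identify an interval with the point $(P,Q)=\bigl((a,b),(c,d)\bigr)\in\mathbb R^4$, and write $u\times v=u_1v_2-u_2v_1$. The first equation of~\eqref{para} between $(P,Q)$ and a fixed interval $(P_k,Q_k)$ reads $(P-P_k)\times(Q-Q_k)=0$, a quadric in $\mathbb R^4$. Subtracting its $k=1$ instance from its $k=2$ and $k=3$ instances cancels the bilinear term $P\times Q$ and leaves, for $k=2,3$, the affine-linear equation
\[ P\times(Q_k-Q_1)+(P_k-P_1)\times Q+\bigl(P_1\times Q_1-P_k\times Q_k\bigr)=0 . \]
So $B$, seen in $\mathbb R^4$, lies on the variety $V$ cut out by these two hyperplanes together with the quadric $(P-P_1)\times(Q-Q_1)=0$. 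When the two hyperplanes are independent, $V$ is a quadric sliced by a $2$-plane, i.e.\ a (possibly degenerate) conic; the coordinate projections $(P,Q)\mapsto P$ and $(P,Q)\mapsto Q$ restrict to affine maps on the plane containing $V$, hence carry $V$ onto curves $\Gamma_1,\Gamma_2\subseteq\mathbb R^2$ of degree at most $2$ (a pair of lines is still degree $2$, and a single line or a point lies on such a curve). Every interval of $B$ then has its initial endpoint on $\Gamma_1$ and its terminal endpoint on $\Gamma_2$, and $|B|\gtrsim N^{1/2}$, which is Conclusion~3 of Theorem~\ref{maint2}.

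The main obstacle is the case analysis that makes this last step legitimate. The two hyperplanes fail to be independent precisely when $i_1,i_2,i_3$ vary affinely, meaning $P_3-P_1,Q_3-Q_1$ are equal scalar multiples of $P_2-P_1,Q_2-Q_1$; if the averaging only ever produces such degenerate triples, a short argument shows that all but a few intervals of $\mathfrak I_1$ have their initial endpoints on one line $L_P$ and their terminal endpoints on one line $L_Q$. Parametrising these intervals as $i(t)=(P_1+tw,Q_1+tw')$ and substituting into the first equation of~\eqref{para} against a fixed interval of $\mathfrak I_2$ yields a polynomial in $t$ of degree at most $2$ with leading coefficient $w\times w'$: if $L_P\not\parallel L_Q$ it is a genuine quadratic, so each interval of $\mathfrak I_2$ is paired with at most two members of this large affine family, which forces $|\mathfrak I_1|$ to be bounded; and if $L_P\parallel L_Q$ the polynomial is linear, and since most intervals of $\mathfrak I_2$ must satisfy it for at least two values of $t$ they must kill both of its coefficients --- one quadric and one affine-linear equation in $\mathbb R^4$ --- so that $\mathfrak I_2$ itself lies on a conic-cut variety and Conclusion~3 again follows. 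The one remaining scenario is the boundary case where $|\mathfrak I_1|$ is bounded by an absolute constant (forcing $|\mathfrak I_2|\gtrsim N$); I expect that the construction of $\mathfrak I_1,\mathfrak I_2$ inside the proof of Theorem~\ref{maint} precludes this or else returns us to Conclusions~1--2, and I would handle it separately using that construction. Keeping the quantitative bounds alive through the whole case analysis is where the real effort goes; the conic itself drops out of the linear algebra above essentially for free.
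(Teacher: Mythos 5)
Your handling of Cases 1 and 2 coincides with the paper's (your fixed point $P$ is the paper's $(-\tfrac{C}{A+B},-\tfrac{D}{A+B})$), so the substance of the review concerns Case 3, where you take a genuinely different route: you discard the regulus and argue only from the black-box hypothesis that every pair in $\mathfrak{I}_1\times\mathfrak{I}_2$ forms a trapezoid and $|\mathfrak{I}_1||\mathfrak{I}_2|\gtrsim N$. That hypothesis is too weak, and the step where you claim the projections of $V$ are degree-$2$ curves fails. The quadric $(P-P_1)\times(Q-Q_1)=0$ has rank four and is doubly ruled by $2$-planes: the plane $\{(P,Q)=(P_1+\lambda R,\,Q_1+R)\colon R\in\mathbb{R}^2\}$ lies entirely inside it, consists of intervals that pairwise satisfy the left equation of (\ref{para}) (these are exactly the homothety families of Case 2), and projects onto all of $\mathbb{R}^2$ in both coordinates; if $H_2\cap H_3$ is such a plane then $V$ is a surface and the correct conclusion is 2 (or, for the planes of the other ruling of this quadric, conclusion 1), not 3. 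The same defect sinks your $L_P\parallel L_Q$ sub-case: one quadric and one hyperplane in $\mathbb{R}^4$ cut out a surface, not a ``conic-cut variety,'' and a family of translates realizes a genuinely $2$-dimensional set of endpoints killing both coefficients. Finally, the boundary case $|\mathfrak{I}_1|\leq 2$ cannot be closed from the combinatorial hypothesis at all: a single interval $i_1$ together with $N-1$ generic intervals each forming a trapezoid with it satisfies your hypothesis while violating all three conclusions.

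All of these problems have one cure, which is the paper's argument: remember that $\mathfrak{L}_1,\mathfrak{L}_2$ are the two rulings of a regulus. Every line of the large ruling meets three fixed pairwise skew lines $\ell_1,\ell_2,\ell_3$ of the opposite ruling (which may be taken on the regulus itself, so no lower bound on $|\mathfrak{L}_2|$ is needed), giving the system (\ref{parsys})--(\ref{linsys}). Your $\mathbb{R}^4$ linear algebra is exactly the paper's subtraction step, but the paper uses skewness to exclude the rank-one case (your dependent hyperplanes): it would force $\ell_1,\ell_2,\ell_3$ to be concurrent. Moreover the common transversals of three pairwise skew lines form a one-parameter family, which is what guarantees the solution set is a curve and never one of the ruled $2$-planes above. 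In short, the conic does not drop out of the linear algebra for free; it drops out of the regulus.
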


In fact, we will prove some additional conditions about each of these cases. We note that each type of conic (including the degenerate conics) can be present in the third case, and we will give examples for all of them.\\

Throughout we use the notation $A \gtrsim B$ to mean there exists a universal constant $C>0$ for which $A \geq CB$.

\section{Proof of Theorem~\ref{maint}}

The main tool that we will use is a theorem of Guth and Katz on incidences of lines in $\mathbb{R}^3$. The following is a corollary of Theorem~1.2 in \cite{GK} via a standard dyadic summation.

\begin{theorem}\label{tgk}{(Guth--Katz)} Let $\mathfrak{L}$ be a set of $N$ lines in $\mathbb{R}^3$. If the number of pairs of intersecting lines in $\mathfrak{L}$ exceeds $N^{3/2}\log N$, then one of the following holds.
\begin{enumerate}[1.]
    \item There are $\gtrsim N^{1/2}$ concurrent lines in $\mathfrak{L}$.
    \item There exists a plane containing $\gtrsim N^{1/2}$ lines of $\mathfrak{L}$. 
    \item There exists a regulus containing a subset of lines $\mathfrak{L}_R \subset \mathfrak{L}$ such that the number of pairs of intersecting lines in $\mathfrak{L}_R$ is $\gtrsim N$.
\end{enumerate}
\end{theorem}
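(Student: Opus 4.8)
The plan is to deduce Theorem~\ref{tgk} from the Guth--Katz incidence estimate by counting intersecting pairs through their intersection points and then summing dyadically over the multiplicities of those points. Since two distinct lines in $\mathbb{R}^3$ meet in at most one point, the number of intersecting pairs in $\mathfrak{L}$ is exactly $\sum_p \binom{m_p}{2}$, where $p$ runs over all points lying on at least two lines of $\mathfrak{L}$ and $m_p$ is the number of lines of $\mathfrak{L}$ through $p$. It is convenient to split this as the contribution of points of multiplicity $m_p \ge 3$ plus the number of points of multiplicity exactly $2$ (each of which accounts for a single intersecting pair). I would argue by contrapositive: assuming that none of the three conclusions holds, I will show that $\sum_p \binom{m_p}{2} \lesssim N^{3/2}\log N$, contradicting the hypothesis.

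First I would record what the failure of each conclusion buys. If conclusion~1 fails, then every point lies on fewer than $cN^{1/2}$ lines for a suitable constant $c$, so in a dyadic decomposition by multiplicity only the $O(\log N)$ scales $3 \le k \lesssim N^{1/2}$ contribute. If conclusion~2 fails, no plane contains $\gtrsim N^{1/2}$ lines, which is precisely the hypothesis under which the Guth--Katz bound $P_k \lesssim N^{3/2}k^{-2} + Nk^{-1}$ holds for $k \ge 3$, where $P_k$ denotes the number of points lying on at least $k$ lines. Feeding these into the dyadic sum then gives
\begin{equation*}
\sum_{p:\, m_p \ge 3} \binom{m_p}{2} \;\lesssim\; \sum_{3 \le k \lesssim N^{1/2}} k^2 P_k \;\lesssim\; \sum_{3 \le k \lesssim N^{1/2}} \left( N^{3/2} + Nk \right) \;\lesssim\; N^{3/2}\log N + N \cdot N^{1/2} \;\lesssim\; N^{3/2}\log N,
\end{equation*}
where $k$ ranges over powers of two, the first term accumulates one copy of $N^{3/2}$ from each of the $O(\log N)$ dyadic scales, and the geometric sum in the second term is dominated by its top scale $k \approx N^{1/2}$. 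This is the ``standard dyadic summation'', and it explains why the threshold in the theorem carries a logarithmic factor: it is exactly the number of dyadic scales surviving once conclusion~1 is excluded.

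The step I expect to be the main obstacle is the endpoint $k = 2$, that is, the pairs meeting at a point through which no third line passes. These are not controlled by the $k \ge 3$ bound, and their number can be large; they are the reason the regulus alternative appears. Here I would invoke the structural half of the Guth--Katz theorem: the $2$-rich points not already accounted for by the generic $N^{3/2}$ estimate must concentrate on planes or on reguli. Points concentrating on a plane force many coplanar lines and return us to conclusion~2, so the task is to convert ``a regulus carrying many $2$-rich points'' into conclusion~3. This is natural because a regulus is doubly ruled: if a regulus contains $a$ lines of one ruling and $b$ lines of the other, then its $a$ lines meet its $b$ lines in exactly $ab$ points, while lines of the same ruling are skew. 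Hence the $2$-rich points hosted by a regulus are in bijection with intersecting pairs of lines lying on it, so a regulus responsible for $\gtrsim N$ of the excess intersections supplies a subset $\mathfrak{L}_R$ with $\gtrsim N$ intersecting pairs, which is conclusion~3.

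Assembling these pieces, failure of all three conclusions forces both the multiplicity-$\ge 3$ contribution (bounded above by $N^{3/2}\log N$) and the multiplicity-$2$ contribution (bounded, after excluding planes and reguli, by $\lesssim N^{3/2}$) to be small, so the total number of intersecting pairs is $\lesssim N^{3/2}\log N$, a contradiction. The care in the argument lies in choosing the implied constants consistently across the three exclusions so that the sum genuinely falls below the stated threshold, and in matching the $2$-rich structural dichotomy of Guth--Katz to the precise ``$\gtrsim N$ intersecting pairs in a regulus'' phrasing of conclusion~3 rather than the weaker statement that a regulus contains many lines.
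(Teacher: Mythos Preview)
The paper does not supply a proof of Theorem~\ref{tgk} beyond the one-line remark that it is ``a corollary of Theorem~1.2 in \cite{GK} via a standard dyadic summation,'' so there is no detailed argument to compare against. Your dyadic sum over multiplicities $k\ge 3$, using $P_k\lesssim N^{3/2}k^{-2}+Nk^{-1}$ under the no-plane hypothesis, is exactly that standard summation and is correct; this is the part the paper has in mind.

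Where your sketch has a genuine gap is the $k=2$ endpoint, which you rightly flag as the main obstacle but do not actually close. The black-box Guth--Katz $2$-rich point bound only says that if $|P_2|\gg N^{3/2}$ then some plane or regulus contains $\gtrsim N^{1/2}$ lines of $\mathfrak{L}$. But a regulus can contain $\gtrsim N^{1/2}$ lines all lying in a single ruling and hence carry \emph{zero} internal intersecting pairs, so ``$\gtrsim N^{1/2}$ lines on a regulus'' does not by itself yield conclusion~3 as phrased (``$\gtrsim N$ intersecting pairs''). You assert the existence of ``a regulus responsible for $\gtrsim N$ of the excess intersections'' without explaining why a \emph{single} regulus must absorb that much. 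To justify this one has to reach inside the Guth--Katz proof rather than use it as a black box: the reguli that matter arise as irreducible degree-$2$ components of a polynomial of degree $\lesssim N^{1/2}$, so there are at most $\lesssim N^{1/2}$ of them, and pigeonholing the $\gg N^{3/2}$ excess $2$-rich points over these components forces one regulus to host $\gtrsim N$ such points, each of which (in the Guth--Katz setup) is the intersection of two lines both contained in that component. Your ``bijection'' sentence is also loose for the same reason: a $2$-rich point lying on a regulus need not be the intersection of two lines both on the regulus, so the correspondence you describe only holds for the specific excess points produced by the polynomial-partitioning argument, not for arbitrary $2$-rich points on the surface.
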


The reason that reguli and planes appear here is that they are the only doubly ruled surfaces in $\mathbb{R}^3$; the plane is in fact infinitely ruled. Recall that a \emph{doubly ruled surface} is one for which at every point on the surface, there are two distinct lines contained within the surface and containing the point. A regulus contains two families of lines such that there are no intersections within a family but any pair of lines from different families intersect. That is, if we select $M$ lines from one ruling within a regulus and $N$ lines from the other, then there will be precisely $MN$ pairs of intersecting lines.  

We apply this theorem by establishing a correspondence between intervals in the plane and lines in $\mathbb{R}^3$ in such a way that a pair of intervals form a trapezoid if and only if the associated pair of lines intersects. Then pulling back the structures of lines with many intersections given by Theorem~\ref{tgk} leads to the interval structures stated in Theorem~\ref{maint}. This correspondence is obtained in the following lemma. 

\begin{lemma}\label{corresp}
There is a bijection $\mathcal{L}$ from intervals in $\mathbb{R}^2$ to lines in $\mathbb{R}^3$ that are not parallel to the $xy$-plane, such that a pair of intervals forms a trapezoid if and only if their images under $\mathcal{L}$ intersect. 
\end{lemma}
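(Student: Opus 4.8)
The plan is to construct an explicit map $\mathcal{L}$ from intervals to lines in $\mathbb{R}^3$, where the key idea is that a trapezoid condition like $(a-a')(d-d') = (b-b')(c-c')$ is bilinear and can be read as a coincidence/incidence condition. The natural encoding to try: an interval $(a,b;c,d)$ determines two ``slopes'' or ``ratios'' governing its two endpoints, and the pair of defining equations in~\eqref{para} should be reinterpreted as: two intervals are parallel (first equation) iff certain lines meet in a certain region, and two intervals span a trapezoid the other way (second equation) iff they meet elsewhere. Concretely, I would send the interval $(a,b;c,d)$ to the line in $\mathbb{R}^3$ through the two points $(a, c, 0)$ and $(b, d, 1)$ -- i.e. the line $\{(a + t(b-a),\, c + t(d-c),\, t) : t \in \mathbb{R}\}$. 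Since $(a,b)\neq(c,d)$ forces $(a,c)\neq(b,d)$, this is a genuine line, and it is parallel to the $xy$-plane only if its $z$-coordinate is constant, which never happens here; conversely any line not parallel to the $xy$-plane meets the planes $z=0$ and $z=1$ in unique points $(a,c,0)$ and $(b,d,1)$, so the map is a bijection onto the stated set.

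Next I would verify the incidence equivalence. Two such lines, coming from $(a,b;c,d)$ and $(a',b';c',d')$, intersect iff there is a common point; parametrising both by the $z$-coordinate $t$, a common point at height $t$ requires $a + t(b-a) = a' + t(b'-a')$ and $c + t(d-c) = c' + t(d'-c')$, i.e.
\begin{equation}\label{eq:twolin}
(a-a') = t\bigl((a-a') - (b-b')\bigr), \qquad (c-c') = t\bigl((c-c') - (d-d')\bigr).
\end{equation}
This is a pair of linear equations in the single unknown $t$; a solution exists iff the two equations are consistent, which (after clearing denominators) is exactly the determinant condition
\[
(a-a')\bigl((c-c')-(d-d')\bigr) = (c-c')\bigl((a-a')-(b-b')\bigr),
\]
i.e. $(a-a')(d-d') = (b-b')(c-c')$ after expanding and cancelling the common term $(a-a')(c-c')$ -- the first equation of~\eqref{para}. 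The second equation of~\eqref{para} should come out of the same computation applied to the line associated to the \emph{reverse} of one interval, or equivalently by noting it is the first condition for the pair $(a,b;c,d)$, $(c',d';a',b')$; I would check that reversing an interval corresponds to the involution of the line swapping its intersection points with $z=0$ and $z=1$, and that the original pair forms a trapezoid iff either the two lines meet or one line meets the reverse of the other. Actually it is cleaner to observe that both equations of~\eqref{para} arise from a single incidence statement once we set up the encoding so that ``forms a trapezoid'' already accounts for both pairings; I would confirm that the intersection of $\mathcal{L}(i_1)$ with $\mathcal{L}(i_2)$ captures the first equation and deal with the second by the reverse, then remark that the degenerate cases (collinear intervals, shared endpoint) correspond exactly to the lines coinciding or meeting on $z=0$ or $z=1$, consistent with the ``acceptable degeneracies'' noted after~\eqref{para}.

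The main obstacle I anticipate is not the intersection computation -- that is routine bilinear algebra -- but making sure the bijection cleanly handles \emph{both} branches of the disjunction in~\eqref{para} with a single line-incidence test, rather than needing two tests (one for the line, one for the reverse). The resolution is likely one of: (i) prove the two branches of~\eqref{para} are related by the reverse operation and show the reverse corresponds to a symmetry that preserves incidences, so that it suffices to consider intervals up to reversal, with $N$ intervals still giving $N$ lines; or (ii) argue that for a pair of distinct intervals, the second equation of~\eqref{para} holding but not the first forces a configuration (diagonals of a parallelogram / crossing segments) which is \emph{also} detected because the lines $\mathcal{L}(i_1)$ and $\mathcal{L}(i_2)$ still intersect -- one would check this by direct substitution. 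I would pick whichever of (i) or (ii) the subsequent proof of Theorem~\ref{maint} actually relies on; given that Theorem~\ref{maint} counts pairs forming trapezoids and feeds the count into Theorem~\ref{tgk}, the bijection must be exactly $N \leftrightarrow N$ with ``trapezoid pair'' $\leftrightarrow$ ``intersecting pair'', so the write-up needs to pin down this correspondence carefully, likely via option (ii) combined with a short case analysis of the degenerate scenarios in Figure~\ref{pep}.
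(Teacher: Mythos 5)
Your construction and the core computation are essentially the paper's: you send an interval to a line transversal to the horizontal planes whose traces at two heights record the two endpoints (the paper uses base point $(b,d,0)$ and direction $(a,c,1)$; your line through $(a,c,0)$ and $(b,d,1)$ is an equivalent variant), and the intersection condition reduces to the first equation of (\ref{para}) by exactly the $2\times 2$ consistency computation you perform. Two things are missing, one of which is a genuine error in the direction you say you would take.

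First, the substantive gap: your option (ii) is false. If two intervals satisfy only the \emph{second} equation of (\ref{para}), the lines $\mathcal{L}(i_1)$ and $\mathcal{L}(i_2)$ do \emph{not} in general intersect. Take $i_1=(0,0;2,0)$ and $i_2=(1,1;0,1)$ (a genuine trapezoid whose parallel sides come from the crossed pairing of endpoints): the first equation reads $(0-1)(0-1)=(0-1)(2-0)$, i.e.\ $1=-2$, which fails, so the associated lines are disjoint even though the pair forms a trapezoid. The second equation is detected only by intersecting $\mathcal{L}(i_1)$ with the image of the \emph{reverse} of $i_2$, and the correspondence is therefore not ``$N$ intervals $\leftrightarrow$ $N$ lines with trapezoid pairs $\leftrightarrow$ intersecting pairs'' as you assert it must be. The paper resolves this in the following lemma by passing to $2N$ lines (each interval together with its reverse), so that each trapezoid pair contributes two or four line intersections, and then subtracting the $N$ spurious intersections between a line and its own reverse before feeding the count into the Guth--Katz bound. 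Without this step your bijection does not prove the ``only if'' direction of the lemma as you have set it up, and your option (i) (keeping $N$ lines) does not work either. Second, a minor omission: even for the first equation of (\ref{para}) the determinant condition is necessary but not sufficient, since when the two direction vectors coincide but the lines are distinct (in the paper's coordinates, $a=a'$, $c=c'$, and $(b,d)\neq(b',d')$) the lines are parallel and meet only at infinity. The paper notes this and disposes of it by applying a generic rotation of $\mathbb{R}^2$ to the finite interval set before constructing the lines; your write-up should do the same.
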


\begin{proof}
To every interval $(a,b;c,d)$ we associate the unique line 
\begin{equation}\label{L} \mathcal{L}(a,b;c,d) = \left \{ \begin{pmatrix} b \\ d \\ 0  \end{pmatrix} + t\begin{pmatrix} a \\ c \\ 1  \end{pmatrix} \colon t \in \mathbb{R} \right\} \subset \mathbb{R}^3.\end{equation}
Conversely, every line $\ell \subset \mathbb{R}^3$ that is not parallel to the $xy$-plane can be normalized to have the same form as the line described above. Define $\mathcal{I}(\ell)$ to be this corresponding interval. Clearly $\mathcal{L}$ is a bijection from intervals in $\mathbb{R}^2$ to lines not parallel to the $xy$-axis in $\mathbb{R}^3$, and $\mathcal{I}$ is its inverse. 

We will now see that under this line--interval correspondence, intervals forming trapezoids correspond to intersecting lines. Let $(a,b;c,d)$ and $(a',b';c',d')$ be intervals such that $(a,b;c,d) \neq (a',b';c',d')$, possibly the reverse of each other. Observe that the lines $\mathcal{L}(a,b;c,d)$ and $\mathcal{L}(a',b';c',d')$ intersect if and only if there is a solution $t \in \mathbb{R}$ to the system of equations 
\begin{equation}\label{LI} t(a-a') = b'-b; \quad t(c-c') = d'-d.\end{equation}
Note that (\ref{LI}) implies the left equation in (\ref{para}). Conversely, the left equation in (\ref{para}) implies there is a unique solution $t \in \mathbb{R}$ to (\ref{LI}), except in the case $a = a'$, $c = c'$, and either $b \neq b'$ or $d \neq d'$. In this exceptional case, the corresponding lines in $\mathbb{R}^3$ are parallel, and so intersect `at infinity' when viewing $\mathbb{R}^3$ embedded inside projective space. Alternatively, for any finite set of intervals one can apply a generic rotation of $\mathbb{R}^2$ so that this exceptional case is avoided and all intersections of the corresponding lines in $\mathbb{R}^3$ take place in affine space.

Note also that the intersection of $\mathcal{L}(a,b;c,d)$ and $\mathcal{L}(a',b';c',d')$ does not imply the right equation in (\ref{para}). Instead, the intersection of $\mathcal{L}(c,d;a,b)$ and $\mathcal{L}(a',b';c',d')$ corresponds to this other pairing of endpoints being parallel. 
\end{proof}

There is a minor technical issue caused by the fact that intervals are directed, whereas we want to count trapezoids formed by either direction. The following lemma shows how we will deal with this. 

\begin{lemma}\label{i2l} Let $\mathfrak{I}$ be a set of $N$ distinct intervals in $\mathbb{R}^2$ and let $T$ be the set of trapezoids formed by pairs of intervals in $\mathfrak{I}$. Then there is a set $\mathfrak{L}(\mathfrak{I})$ of $2N$ lines in $\mathbb{R}^3$ such that
\[ 2 |T|= \# \{\text{Pairs of intersecting lines in }\mathfrak{L}(\mathfrak{I})\}-N.\]
Note that in the above counting, if two intervals $(a,b;c,d), (a',b';c'd') \in \mathfrak{I}$ satisfy both equations of (\ref{para}) then we count the trapezoid with multiplicity two. 
\end{lemma}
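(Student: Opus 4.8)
The plan is to take $\mathfrak{L}(\mathfrak{I})$ to be the set of $2N$ lines obtained by applying the bijection $\mathcal{L}$ of Lemma~\ref{corresp} to each interval of $\mathfrak{I}$ \emph{together with its reverse}; that is, $\mathfrak{L}(\mathfrak{I}) = \{\mathcal{L}(a,b;c,d) : (a,b;c,d) \in \mathfrak{I}\} \cup \{\mathcal{L}(c,d;a,b) : (a,b;c,d) \in \mathfrak{I}\}$. Since the intervals in $\mathfrak{I}$ are distinct and have positive length, and $\mathcal{L}$ is injective, this is genuinely a set of $2N$ lines provided no interval of $\mathfrak{I}$ is the reverse of another interval of $\mathfrak{I}$; I would note that one may pass to such a sub-collection, or simply observe that if $(a,b;c,d)$ and $(c,d;a,b)$ both lie in $\mathfrak{I}$ the count still works out with the convention that $\mathfrak{L}(\mathfrak{I})$ is a multiset, so this is a cosmetic point rather than a real obstacle.

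Next I would set up a bookkeeping identity. For each unordered pair $\{i_1,i_2\}$ of distinct intervals in $\mathfrak{I}$ that forms a trapezoid, Lemma~\ref{corresp} tells us which of the four lines $\mathcal{L}(i_1), \mathcal{L}(\bar i_1), \mathcal{L}(i_2), \mathcal{L}(\bar i_2)$ meet: the left equation of (\ref{para}) corresponds to $\mathcal{L}(i_1)\cap\mathcal{L}(i_2) \neq \emptyset$ and, equivalently by symmetry of reversing both, to $\mathcal{L}(\bar i_1)\cap\mathcal{L}(\bar i_2)\neq\emptyset$; the right equation corresponds to $\mathcal{L}(\bar i_1)\cap\mathcal{L}(i_2)\neq\emptyset$ and to $\mathcal{L}(i_1)\cap\mathcal{L}(\bar i_2)\neq\emptyset$. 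So a trapezoid satisfying exactly one equation of (\ref{para}) contributes exactly $2$ intersecting pairs among the four associated lines, and a trapezoid satisfying both equations contributes exactly $4$. With the stated convention of counting such a double trapezoid with multiplicity two in $|T|$, in both cases the trapezoid contributes $2 \times (\text{its multiplicity in } T)$ intersecting line pairs. Summing over all trapezoid-forming pairs gives the cross-pair count as $2|T|$, once I check that these are the \emph{only} intersections between lines coming from distinct intervals — which is exactly the content of Lemma~\ref{corresp} read in both directions.

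Finally I would account for the ``diagonal'' intersections, i.e. intersecting pairs in $\mathfrak{L}(\mathfrak{I})$ both of whose lines come from the \emph{same} interval $i = (a,b;c,d)$. The only such pair is $\{\mathcal{L}(i), \mathcal{L}(\bar i)\}$, and these two lines always intersect: plugging $i$ and $\bar i$ into (\ref{LI}) gives $t(a-c) = d-b$ and $t(c-a) = b-d$, which have the common solution $t = (d-b)/(a-c)$ (or the lines are parallel, hence meet at infinity / after a generic rotation, exactly as handled in Lemma~\ref{corresp}). This yields exactly $N$ such intersecting pairs. Therefore the total number of intersecting pairs in $\mathfrak{L}(\mathfrak{I})$ is $2|T| + N$, which rearranges to the claimed identity. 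The main point requiring care — and the step I would treat most carefully — is the combinatorial matching in the previous paragraph: making sure each trapezoid is counted with the correct multiplicity on both sides simultaneously, and that the four sub-cases in Figure~\ref{pep} are all consistent with the convention, rather than anything geometrically deep.
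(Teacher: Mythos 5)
Your proposal is correct and follows essentially the same route as the paper: you build $\mathfrak{L}(\mathfrak{I})$ from each interval and its reverse, observe that a trapezoid-forming pair contributes two (resp.\ four) intersecting line pairs according as one (resp.\ both) equations of (\ref{para}) hold, and subtract the $N$ interval--reverse intersections. The only difference is that you spell out the matching between the two equations of (\ref{para}) and the four line pairings, and the degenerate/duplicate cases, slightly more explicitly than the paper does.
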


\begin{proof}
Let $\mathfrak{I} = \{(a_i,b_i;c_i,d_i) \colon 1 \leq i \leq N \}$ be a set of $N$ distinct intervals in $\mathbb{R}^2$. Associate with $\mathfrak{I}$ the set 
\[ \mathfrak{L}(\mathfrak{I}) = \{ \mathcal{L}(a_i,b_i;c_i,d_i) \colon 1 \leq i \leq N \}\cup \{ \mathcal{L}(c_i,d_i;a_i,b_i) \colon 1 \leq i \leq N \} ,\]
of $2N$ lines in $\mathbb{R}^3$, where $\mathcal{L}$ is the map from Lemma \ref{corresp}. This is the set of lines associated to either an interval in $\mathfrak{I}$, or its reverse direction. As mentioned previously, $\mathbb{R}^2$ can be rotated so that all resulting lines in $\mathfrak{L}$ have intersections in affine space, not at infinity. Henceforth, we will assume $\mathfrak{I}$ has this property. 

For every pair of intervals $(a,b;c,d),(a',b',c',d') \in \mathfrak{I}$ satisfying one (resp. two) equation(s) in (\ref{para}), there are two (resp. four) intersections among the lines
\[ \mathcal{L}(a,b;c,d), \ \mathcal{L}(c,d;a,b), \ \mathcal{L}(a',b';c',d'), \ \mathcal{L}(c',d';a',b') .\]
Note that $\mathcal{L}(a_i,b_i;c_i,d_i)$ and $\mathcal{L}(c_i,d_i;a_i,b_i)$ intersect exactly once for all $1 \leq i \leq N$. These intersections correspond to an interval forming a trapezoid with its reverse direction. All other pairs of intersecting lines in $\mathfrak{L}$ correspond to a trapezoid formed by distinct line segments. 
\end{proof}

Having set up this correspondence, we can now prove Theorem~\ref{maint} by applying Theorem~\ref{tgk}.  

\begin{proof}[Proof of Theoerem~\ref{maint}] Let $\mathfrak{I}$ be a set of $N$ distinct intervals, and let $\mathfrak{L} = \mathfrak{L}(\mathfrak{I})$ be as above. By Lemma~\ref{i2l}, if at least $N^{3/2}\log N$ pairs of intervals form trapezoids, then more than $N^{3/2}\log N$ pairs of lines intersect in $\mathfrak{L}$, excluding the interval-reverse interval intersections. By Theorem~\ref{tgk}, many lines are concurrent, lie in a plane, or are contained in a regulus. We consider these three cases separately, which correspond to 1, 2, and 3 above, respectively. \\

\emph{Case 1: Concurrent lines.} Suppose that $\gtrsim N^{1/2}$ lines of $\mathfrak{L}$ pass through the point $(u,v,w)\in \mathbb{R}^3$. Then $\gtrsim N^{1/2}$ lines of $\mathfrak{L}$ are of the form $[u-aw,v-cw,0]+t[a,c,1]$. These lines correspond to $\gtrsim N^{1/2}$ intervals with one endpoint on $y = u-wx$ and the other on $y = v-wx$. Note that if $u = v$, then many intervals are contained entirely on the line $y = u-wx$. In this case, an interval and its reverse might be represented in the set of concurrent lines passing through $(u,v,w)$. \\ 

\emph{Case 2: Lines in a plane.} Suppose that $\gtrsim N^{1/2}$ lines lie in the plane $Ax+By+Cz+D=0$. A line in $\mathfrak{L}$ of the form (\ref{L}) belongs to this plane if 
\[ t(Aa+Bc+C) + Ab+Bd+D = 0,\]
for all $t \in \mathbb{R}$. Thus $(a,c)$ is on the line $Ax+By+C=0$ and $(b,d)$ is on the line $Ax+By+D=0$. Note that $A,B$ are not both zero, since no line in $\mathfrak{L}$ is parallel to the $xy$-plane. We conclude that $\gtrsim N^{1/2}$ intervals $(a,b;c,d)$ of $\mathfrak{I}$ satisfy $Aa+Bc+C=Ab+Bd+D=0$.\\ 

\emph{Case 3: Lines in a regulus.} Suppose there exists a subset $\mathfrak{L}_R \subset \mathfrak{L}$ such that the number of pairs of intersecting lines in $\mathfrak{L}_R$ is $\gtrsim N$. As noted earlier, $\mathfrak{L}_R$ can be partitioned into $\mathfrak{L}_1$ and $\mathfrak{L}_2$, where the lines in $\mathfrak{L}_1$ belong to one ruling of the regulus and $\mathfrak{L}_2$ from the other. There are no intersecting pairs of lines within the same ruling, thus $|\mathfrak{L}_1||\mathfrak{L}_2| \gtrsim N$. Pulling back $\mathfrak{L}_1,\mathfrak{L}_2$ to intervals in $\mathbb{R}^2$ gives $\mathfrak{I}_1,\mathfrak{I}_2$ as described. 
\end{proof}


\section{Analysis of underlying geometry}

In this section we further discuss the geometry underlying the situations 1, 2, 3 of Theorem~\ref{maint}, proving Theorem~\ref{maint2} before giving examples.\\

\textbf{Case 1 (Concurrent):} The geometry in this case is clear. For example, see Figure~\ref{case1}. Any pair of intervals with endpoints on the two lines forms a trapezoid, resulting in $\gtrsim N$ trapezoids. We can see that the bounds in Theorem~\ref{maint} are almost tight via the following situation. Given $\gtrsim N^{1/2}$ points in $\mathbb{R}^3$ each with $\gtrsim N^{1/2}$ lines of $\mathfrak{L}$ passing through them, the pullback to the plane creates $\gtrsim N^{1/2}$ pairs of parallel lines in $\mathbb{R}^2$, with $\gtrsim N^{1/2}$ intervals between each pair. This totals $\gtrsim N^{3/2}$ pairs of intervals forming trapezoids.   \\

\textbf{Case 2 (Coplanar):} Suppose that many intervals $(a,b;c,d)$ of $\mathfrak{I}$ satisfy $Aa+Bc+C=Ab+Bd+D=0$. If $A+B \neq 0$, we have $\frac{A}{A+B}(a,b) + \frac{B}{A+B}(c,d) = (-\frac{C}{A+B},-\frac{D}{A+B})$. In other words, the line containing the points $(a,b)$ and $(c,d)$ always contains $(-\frac{C}{A+B},-\frac{D}{A+B})$. Moreover, the ratio of the distance between $(a,b)$ and $(-\frac{C}{A+B},-\frac{D}{A+B})$ to $(c,d)$ and $(-\frac{C}{A+B},-\frac{D}{A+B})$ is $B:A$. If $AB>0$ then the point $(-\frac{C}{A+B},-\frac{D}{A+B})$ is on $(a,b;c,d)$, and if $AB=0$ it is an endpoint of each interval. A special case of this is $A=B$ and pairs of intervals are the diagonals of a parallelogram. On the other hand, if $A+B=0$, then $a+C/A = c$ and $b+D/A = d$. This corresponds to a set of intervals that are translates of each other. Hence, in this case the intervals are contained on a pencil of lines. Note that the additional condition on the ratio of the distances means that there is a homothety such that the terminal points $(c,d)$ are the images of the initial points $(a,b)$ under this transformation. See Figure~\ref{case2eg} for examples.\\

\begin{figure}[h!]
\begin{center}
\begin{tikzpicture}[scale=0.8]

\begin{scope}[xshift=5mm,yshift=0mm]

\filldraw
( 2.6775 , 0.0 ) circle (2.5 pt)
( 4.7925 , 5.4975000000000005 ) circle (2.5 pt)
( 3.6975000000000007 , 0.66 ) circle (2.5 pt)
( 4.425000000000001 , 2.9625 ) circle (2.5 pt)
( 0.585 , 3.93 ) circle (2.5 pt)
( 0.0 , 2.0925000000000002 ) circle (2.5 pt)
( 2.6400000000000006 , 5.197500000000001 ) circle (2.5 pt)
( 1.8675000000000002 , 2.6774999999999998 ) circle (2.5 pt)
( 0.7049999999999996 , 1.8324999999999996 ) circle (2.5 pt)
( 0.0 , 0.0 ) circle (2.5 pt)
( 0.36499999999999977 , 1.6124999999999998 ) circle (2.5 pt)
( 0.12249999999999983 , 0.8449999999999999 ) circle (2.5 pt)
( 1.4024999999999999 , 0.5225 ) circle (2.5 pt)
( 1.5974999999999997 , 1.1349999999999998 ) circle (2.5 pt)
( 0.7174999999999996 , 0.09999999999999998 ) circle (2.5 pt)
( 0.9749999999999996 , 0.94 ) circle (2.5 pt);
\draw
( 2.6775 , 0.0 )--( 0.7049999999999996 , 1.8324999999999996 )
( 4.7925 , 5.4975000000000005 )--( 0.0 , 0.0 )
( 3.6975000000000007 , 0.66 )--( 0.36499999999999977 , 1.6124999999999998 )
( 4.425000000000001 , 2.9625 )--( 0.12249999999999983 , 0.8449999999999999 )
( 0.585 , 3.93 )--( 1.4024999999999999 , 0.5225 )
( 0.0 , 2.0925000000000002 )--( 1.5974999999999997 , 1.1349999999999998 )
( 2.6400000000000006 , 5.197500000000001 )--( 0.7174999999999996 , 0.09999999999999998 )
( 1.8675000000000002 , 2.6774999999999998 )--( 0.9749999999999996 , 0.94 );
   \draw
(1.5,-1) node{$A=1$, $B=3$} ;

\draw[dashed]
( 0.585 , 3.93 )--( 4.7925 , 5.4975000000000005 )
( 1.4024999999999999 , 0.5225 )--( 0.0 , 0.0 );
   
\end{scope}

   \begin{scope}[xshift=100mm]
   \draw
(1.5,-1) node{$A=-1$, $B=3$} ;
   
   \filldraw
( 0.0 , 0.51 ) circle (2.5 pt)
( 3.7314999999999996 , 0.799 ) circle (2.5 pt)
( 4.675 , 0.765 ) circle (2.5 pt)
( 1.87 , 4.5305 ) circle (2.5 pt)
( 0.9520000000000001 , 0.0 ) circle (2.5 pt)
( 3.5614999999999997 , 1.5130000000000001 ) circle (2.5 pt)
( 0.2209999999999998 , 3.0345 ) circle (2.5 pt)
( 3.8165 , 4.488 ) circle (2.5 pt)
( 0.0 , 0.16999999999999996 ) circle (2.5 pt)
( 1.2438333333333333 , 0.26633333333333337 ) circle (2.5 pt)
( 1.5583333333333333 , 0.255 ) circle (2.5 pt)
( 0.6233333333333333 , 1.5101666666666664 ) circle (2.5 pt)
( 0.3173333333333334 , 0.0 ) circle (2.5 pt)
( 1.1871666666666665 , 0.5043333333333334 ) circle (2.5 pt)
( 0.07366666666666662 , 1.0114999999999998 ) circle (2.5 pt)
( 1.2721666666666667 , 1.4959999999999998 ) circle (2.5 pt);
\draw
( 0.0 , 0.51 )--( 0.0 , 0.16999999999999996 )
( 3.7314999999999996 , 0.799 )--( 1.2438333333333333 , 0.26633333333333337 )
( 4.675 , 0.765 )--( 1.5583333333333333 , 0.255 )
( 1.87 , 4.5305 )--( 0.6233333333333333 , 1.5101666666666664 )
( 0.9520000000000001 , 0.0 )--( 0.3173333333333334 , 0.0 )
( 3.5614999999999997 , 1.5130000000000001 )--( 1.1871666666666665 , 0.5043333333333334 )
( 0.2209999999999998 , 3.0345 )--( 0.07366666666666662 , 1.0114999999999998 )
( 3.8165 , 4.488 )--( 1.2721666666666667 , 1.4959999999999998 );

\draw[dashed]

( 3.8165 , 4.488 )--( 3.5614999999999997 , 1.5130000000000001 )
( 1.2721666666666667 , 1.4959999999999998 )--( 1.1871666666666665 , 0.5043333333333334 );

   \end{scope}

      \begin{scope}[xshift=5mm,yshift=-60mm]
      \draw
(1.5,-1) node{$A=0$, $B=1$} ;
   \filldraw
( 2.0199999999999996 , 2.93 ) circle (2.5 pt)
( 0.72 , 1.25 ) circle (2.5 pt)
( 3.0700000000000003 , 0.4 ) circle (2.5 pt)
( 0.0 , 0.0 ) circle (2.5 pt)
( 0.19000000000000006 , 1.4600000000000002 ) circle (2.5 pt)
( 5.35 , 3.66 ) circle (2.5 pt)
( 4.93 , 0.16 ) circle (2.5 pt)
( 5.319999999999999 , 4.21 ) circle (2.5 pt)
( 0.0 , 0.0 ) circle (2.5 pt)
( 0.0 , 0.0 ) circle (2.5 pt)
( 0.0 , 0.0 ) circle (2.5 pt)
( 0.0 , 0.0 ) circle (2.5 pt)
( 0.0 , 0.0 ) circle (2.5 pt)
( 0.0 , 0.0 ) circle (2.5 pt)
( 0.0 , 0.0 ) circle (2.5 pt)
( 0.0 , 0.0 ) circle (2.5 pt);
\draw
( 2.0199999999999996 , 2.93 )--( 0.0 , 0.0 )
( 0.72 , 1.25 )--( 0.0 , 0.0 )
( 3.0700000000000003 , 0.4 )--( 0.0 , 0.0 )
( 0.0 , 0.0 )--( 0.0 , 0.0 )
( 0.19000000000000006 , 1.4600000000000002 )--( 0.0 , 0.0 )
( 5.35 , 3.66 )--( 0.0 , 0.0 )
( 4.93 , 0.16 )--( 0.0 , 0.0 )
( 5.319999999999999 , 4.21 )--( 0.0 , 0.0 );

\draw[dashed]
( 2.0199999999999996 , 2.93 )--( 5.319999999999999 , 4.21 );

   \end{scope}

      \begin{scope}[xshift=100mm,yshift=-60mm]
      \draw
(1.5,-1) node{$A=1$, $B=-1$} ;
   \filldraw
( 2.685 , 2.3600000000000003 ) circle (2.5 pt)
( 1.955 , 3.0120000000000005 ) circle (2.5 pt)
( 3.01 , 0.508 ) circle (2.5 pt)
( 3.73 , 2.064 ) circle (2.5 pt)
( 0.46 , 0.27599999999999997 ) circle (2.5 pt)
( 0.145 , 2.2920000000000003 ) circle (2.5 pt)
( 0.06 , 0.44400000000000006 ) circle (2.5 pt)
( 2.445 , 1.936 ) circle (2.5 pt)
( 3.685 , 2.3600000000000003 ) circle (2.5 pt)
( 2.955 , 3.0120000000000005 ) circle (2.5 pt)
( 4.01 , 0.508 ) circle (2.5 pt)
( 4.73 , 2.064 ) circle (2.5 pt)
( 1.46 , 0.27599999999999997 ) circle (2.5 pt)
( 1.145 , 2.2920000000000003 ) circle (2.5 pt)
( 1.06 , 0.44400000000000006 ) circle (2.5 pt)
( 3.445 , 1.936 ) circle (2.5 pt);
\draw
( 2.685 , 2.3600000000000003 )--( 3.685 , 2.3600000000000003 )
( 1.955 , 3.0120000000000005 )--( 2.955 , 3.0120000000000005 )
( 3.01 , 0.508 )--( 4.01 , 0.508 )
( 3.73 , 2.064 )--( 4.73 , 2.064 )
( 0.46 , 0.27599999999999997 )--( 1.46 , 0.27599999999999997 )
( 0.145 , 2.2920000000000003 )--( 1.145 , 2.2920000000000003 )
( 0.06 , 0.44400000000000006 )--( 1.06 , 0.44400000000000006 )
( 2.445 , 1.936 )--( 3.445 , 1.936 );

\draw[dashed]
( 0.06 , 0.44400000000000006 )--( 2.445 , 1.936 )
( 1.06 , 0.44400000000000006 )--( 3.445 , 1.936 );

   \end{scope}

\end{tikzpicture}
\end{center}
\caption{Intervals coming from lines contained in a plane} \label{case2eg}
\end{figure}
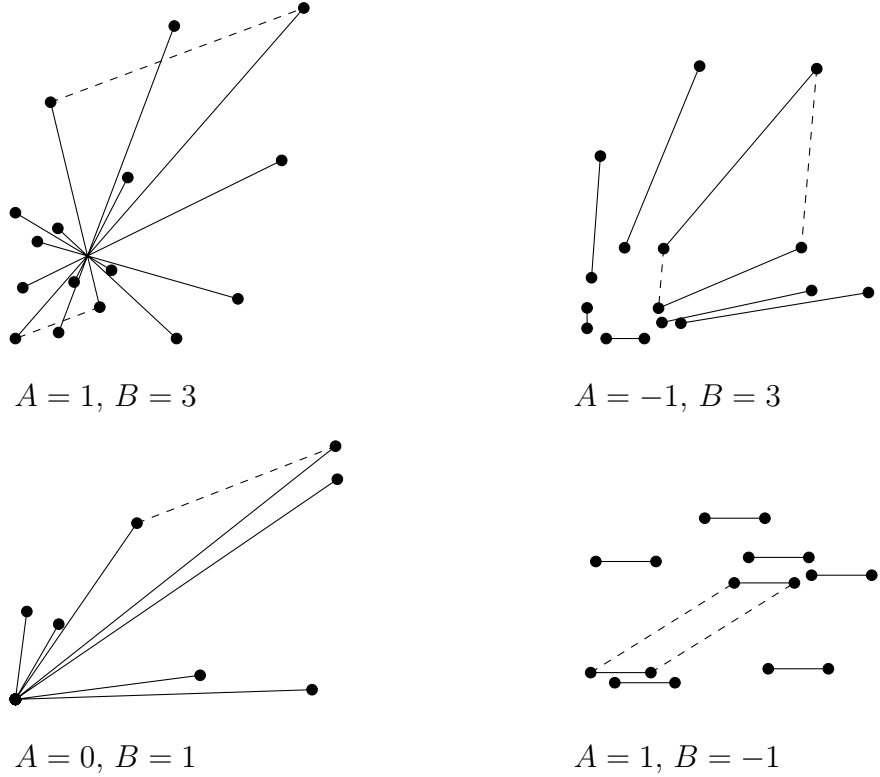


\textbf{Case 3 (Regulus):} As was mentioned previously, the lines in a regulus have a complete bipartite structure with respect to intersections. This therefore reveals an interesting class of examples of sets of intervals forming trapezoids in a complete bipartite way, and many of these examples seem difficult to discover without pulling intervals back from reguli in $\mathbb{R}^3$. We begin with an overview of the general structure of these intervals, followed by a closer examination of specific cases. 

Let $\mathfrak{I}_1,\mathfrak{I}_2,\mathfrak{L}_1,\mathfrak{L}_2$ be as in the proof of Theorem~\ref{maint}. Let $\ell_j = \{[b_j,d_j,0]+t[a_j,c_j,1] \colon t \in \mathbb{R}\}$ for $j = 1,2,3$, be lines in $\mathfrak{L}_2$. A line $\ell = \{[b,d,0]+t[a,c,1] \colon t \in \mathbb{R}\} \in \mathfrak{L}_1$ intersects each $\ell_j$, $j=1,2,3$ and therefore satisfies 
\begin{equation}\label{parsys}
    (a-a_j)(d-d_j) = (b-b_j)(c-c_j), \quad j= 1,2,3.
\end{equation} 
The above is a system of three degree two polynomial equations. Subtracting two equations of the system gives the two linear equations
\begin{align}\label{linsys}
    (d_1-d_2)a+(c_2-c_1)b+(b_2-b_1)c+(a_1-a_2)d &= b_2c_2-b_1c_1 \nonumber \\
    (d_2-d_3)a+(c_3-c_2)b+(b_3-b_2)c+(a_2-a_3)d &= b_3c_3-b_2c_2.
\end{align}

We consider three encompassing cases in the system (\ref{linsys}). 

\emph{Subcase (i):} The $2 \times 2$ coefficient matrix induced by the coefficients of $a$ and $b$ in (\ref{linsys}) is invertible, or the $2 \times 2$ coefficient matrix induced by the coefficients of $c$ and $d$ in (\ref{linsys}) is invertible. If the $2 \times 2$ matrix induced by the coefficients of $a$ and $b$ in (\ref{linsys}) is invertible, then Gaussian elimination on (\ref{linsys}) gives $a$ and $b$ linearly in terms of $c$ and $d$. Substituting these linear relations into one equation of (\ref{parsys}) gives a degree two polynomial in $c$ and $d$, i.e. the set of all $(c,d)$ lie on a conic or line. Thus the set of all $(a,b)$ also lies on a conic or line. An analogous result follows if the $2 \times 2$ matrix induced by the coefficients of $c$ and $d$ in (\ref{linsys}) is invertible. See Figures~\ref{case3a} and~\ref{case3b} for examples. 

\emph{Subcase (ii):} Neither $2 \times 2$ coefficient matrix of Subcase (i) is invertible, but (\ref{linsys}) has rank 2. Then all solutions $(a,b)$ lie on one line, and solutions $(c,d)$ on a line, i.e. $b$ can be given linearly in $a$, and $d$ linearly in $c$. Substituting these linear relations into (\ref{parsys}) shows the set of all $(a,d)$ lie on a conic. See Figure~\ref{case3lines} for example.

\emph{Subcase (iii):} (\ref{linsys}) has rank 1. In this case, the points of all of the sets $\{(a_j,b_j)\}_{j=1}^3$, $\{(a_j,c_j)\}_{j=1}^3$, and $\{(a_j,d_j)\}_{j=1}^3$ are colinear. It follows that there exist $m_1,m_2,m_3,r_1,r_2,r_3 \in \mathbb{R}$ such that $(a_j,b_j,c_j,d_j) = (a_j,m_1a_j+r_1,m_2a_j+r_2,m_3a_j+r_3)$ for $j=1,2,3$. Substituting this relation into (\ref{parsys}) gives a quadratic equation in $a_j^2$ with a $a_j^2$ coefficient of $m_3-m_1m_2$. Since this quadratic equation has at least three solutions (namely $a_1,a_2,a_3$), the leading coefficient is zero, i.e. $m_3 - m_1m_2 = 0$. This relation implies $\ell_1,\ell_2,\ell_3$ all pass through the point $(r_1,-r_2m_1+r_3,-m_1)$. This cannot happen, since lines of the same ruling in a regulus do not intersect. We conclude that Subcase (iii) never occurs. The same argument also shows that in Subcase (i), the solution set of $(a,b)$ lies on a conic, not a line. \\

The preceding analysis shows that the endpoints of $\mathfrak{I}_1$ and $\mathfrak{I}_2$ lie on a conic (Subcase (i)), or a line (Subcase (ii)), thus completing the proof of Theorem~\ref{maint2}. We will now showcase examples involving each type of conic, as well as the Subcase (ii) situation. To facilitate simpler computation, we examine axis parallel reguli. Up to a rigid motion (rotation and translation) of $\mathbb{R}^3$, any regulus takes the form 
\begin{equation}\label{hyper}
\frac{x^2}{A^2}+\frac{y^2}{B^2}-\frac{z^2}{C^2}=1, 
\end{equation}
or
\begin{equation}\label{parab}
z= \frac{x^2}{A^2}-\frac{y^2}{B^2}.
\end{equation}
Equation (\ref{hyper}) describes a hyperboloid of one sheet, and (\ref{parab}) describes a hyperbolic paraboloid. We determine what the corresponding interval set looks like in each case, and then discuss the arrangements that result from rigid transformations of these standard forms of reguli.

The hyperboloid (\ref{hyper}) is ruled by the following two families of lines, parameterized by $\theta \in [0,2\pi]$.
\[ \begin{pmatrix} A \sin \theta \\ -B\cos \theta \\ 0 \end{pmatrix} + t \begin{pmatrix} \frac{A}{C}\cos \theta \\ \frac{B}{C} \sin \theta \\ 1 \end{pmatrix}, \quad \text{and} \quad \begin{pmatrix} A \sin \theta \\ B\cos \theta \\ 0 \end{pmatrix} + t \begin{pmatrix} \frac{A}{C}\cos \theta \\ -\frac{B}{C} \sin \theta \\ 1 \end{pmatrix}, t \in \mathbb{R}. \]
This corresponds to the following two families of intervals parameterized by $\theta \in [0,2\pi]$.
\[ \left( \frac{A}{C}\cos \theta, A \sin \theta ; \frac{B}{C} \sin \theta, -B\cos \theta \right), \quad \text{and} \quad \left( \frac{A}{C}\cos \theta, A \sin \theta ; -\frac{B}{C} \sin \theta, B\cos \theta \right).\]
Observe that the above intervals have their first endpoints on the ellipse $Cx^2+y^2=A^2$ and their second on $Cx^2+y^2=B^2$. Furthermore, the second endpoint in the first family has a phase shift of $-\pi/2$ radians compared to the first endpoint. In the second family, the second endpoint has a phase shift of $+\pi/2$ compared to the first endpoint. If $A=B$ and $C=1$, then the two families of intervals are the reverse of each other, hence any pair of intervals from a union of intervals from the two families forms a trapezoid. In all other cases, the two sets of intervals have bipartite structure. Examples of both of these cases are shown in Figure~\ref{case3a}. \\

\begin{figure}[h!]
\begin{center}
\begin{tikzpicture}[scale=0.75]

\begin{scope}[xshift=0mm]

\draw
(0,-4) node{$A=B=C=1$} ;

\draw[red]
( 2.5 , 0.0 )--( 0.0 , -2.5 )
( 2.3776412907378837 , 0.7725424859373685 )--( 0.7725424859373685 , -2.3776412907378837 )
( 2.0225424859373686 , 1.469463130731183 )--( 1.469463130731183 , -2.0225424859373686 )
( 1.4694631307311832 , 2.022542485937368 )--( 2.022542485937368 , -1.4694631307311832 )
( 0.7725424859373686 , 2.3776412907378837 )--( 2.3776412907378837 , -0.7725424859373686 )
( 1.5308084989341916e-16 , 2.5 )--( 2.5 , -1.5308084989341916e-16 )
( -0.7725424859373684 , 2.377641290737884 )--( 2.377641290737884 , 0.7725424859373684 )
( -1.4694631307311825 , 2.0225424859373686 )--( 2.0225424859373686 , 1.4694631307311825 )
( -2.022542485937368 , 1.4694631307311832 )--( 1.4694631307311832 , 2.022542485937368 )
( -2.3776412907378837 , 0.7725424859373687 )--( 0.7725424859373687 , 2.3776412907378837 )
( -2.5 , 3.061616997868383e-16 )--( 3.061616997868383e-16 , 2.5 )
( -2.377641290737884 , -0.7725424859373682 )--( -0.7725424859373682 , 2.377641290737884 )
( -2.0225424859373686 , -1.4694631307311825 )--( -1.4694631307311825 , 2.0225424859373686 )
( -1.4694631307311832 , -2.022542485937368 )--( -2.022542485937368 , 1.4694631307311832 )
( -0.7725424859373689 , -2.3776412907378837 )--( -2.3776412907378837 , 0.7725424859373689 )
( -4.592425496802574e-16 , -2.5 )--( -2.5 , 4.592425496802574e-16 )
( 0.7725424859373681 , -2.377641290737884 )--( -2.377641290737884 , -0.7725424859373681 )
( 1.4694631307311823 , -2.022542485937369 )--( -2.022542485937369 , -1.4694631307311823 )
( 2.022542485937368 , -1.4694631307311832 )--( -1.4694631307311832 , -2.022542485937368 )
( 2.3776412907378837 , -0.7725424859373691 )--( -0.7725424859373691 , -2.3776412907378837 );
\draw[blue]
( 2.5 , 0.0 )--( -0.0 , 2.5 )
( 2.3776412907378837 , 0.7725424859373685 )--( -0.7725424859373685 , 2.3776412907378837 )
( 2.0225424859373686 , 1.469463130731183 )--( -1.469463130731183 , 2.0225424859373686 )
( 1.4694631307311832 , 2.022542485937368 )--( -2.022542485937368 , 1.4694631307311832 )
( 0.7725424859373686 , 2.3776412907378837 )--( -2.3776412907378837 , 0.7725424859373686 )
( 1.5308084989341916e-16 , 2.5 )--( -2.5 , 1.5308084989341916e-16 )
( -0.7725424859373684 , 2.377641290737884 )--( -2.377641290737884 , -0.7725424859373684 )
( -1.4694631307311825 , 2.0225424859373686 )--( -2.0225424859373686 , -1.4694631307311825 )
( -2.022542485937368 , 1.4694631307311832 )--( -1.4694631307311832 , -2.022542485937368 )
( -2.3776412907378837 , 0.7725424859373687 )--( -0.7725424859373687 , -2.3776412907378837 )
( -2.5 , 3.061616997868383e-16 )--( -3.061616997868383e-16 , -2.5 )
( -2.377641290737884 , -0.7725424859373682 )--( 0.7725424859373682 , -2.377641290737884 )
( -2.0225424859373686 , -1.4694631307311825 )--( 1.4694631307311825 , -2.0225424859373686 )
( -1.4694631307311832 , -2.022542485937368 )--( 2.022542485937368 , -1.4694631307311832 )
( -0.7725424859373689 , -2.3776412907378837 )--( 2.3776412907378837 , -0.7725424859373689 )
( -4.592425496802574e-16 , -2.5 )--( 2.5 , -4.592425496802574e-16 )
( 0.7725424859373681 , -2.377641290737884 )--( 2.377641290737884 , 0.7725424859373681 )
( 1.4694631307311823 , -2.022542485937369 )--( 2.022542485937369 , 1.4694631307311823 )
( 2.022542485937368 , -1.4694631307311832 )--( 1.4694631307311832 , 2.022542485937368 )
( 2.3776412907378837 , -0.7725424859373691 )--( 0.7725424859373691 , 2.3776412907378837 );

\draw[dashed]
( 1.4694631307311832 , 2.022542485937368 )--( -2.5 , 4.592425496802574e-16 )
( 2.022542485937368 , -1.4694631307311832 )--( -4.592425496802574e-16 , -2.5 );

\filldraw
( 1.4694631307311832 , 2.022542485937368 ) circle (8/3 pt)
( -2.5 , 4.592425496802574e-16 ) circle (8/3 pt)
( 2.022542485937368 , -1.4694631307311832 ) circle (8/3 pt)
( -4.592425496802574e-16 , -2.5 ) circle (8/3 pt);

\end{scope}

\begin{scope}[xshift=70mm]

\draw
(0,-4) node{$A=3$, $B=2$, $C=1$} ;
\draw[red]
( 3.0 , 0.0 )--( 0.0 , -2.0 )
( 2.8531695488854605 , 0.9270509831248421 )--( 0.6180339887498948 , -1.902113032590307 )
( 2.4270509831248424 , 1.7633557568774194 )--( 1.1755705045849463 , -1.618033988749895 )
( 1.7633557568774196 , 2.427050983124842 )--( 1.6180339887498947 , -1.1755705045849465 )
( 0.9270509831248424 , 2.8531695488854605 )--( 1.902113032590307 , -0.6180339887498949 )
( 1.8369701987210297e-16 , 3.0 )--( 2.0 , -1.2246467991473532e-16 )
( -0.927050983124842 , 2.853169548885461 )--( 1.9021130325903073 , 0.6180339887498947 )
( -1.7633557568774192 , 2.4270509831248424 )--( 1.618033988749895 , 1.175570504584946 )
( -2.427050983124842 , 1.7633557568774196 )--( 1.1755705045849465 , 1.6180339887498947 )
( -2.8531695488854605 , 0.9270509831248426 )--( 0.618033988749895 , 1.902113032590307 )
( -3.0 , 3.6739403974420594e-16 )--( 2.4492935982947064e-16 , 2.0 )
( -2.853169548885461 , -0.9270509831248419 )--( -0.6180339887498946 , 1.9021130325903073 )
( -2.4270509831248424 , -1.7633557568774192 )--( -1.175570504584946 , 1.618033988749895 )
( -1.7633557568774196 , -2.427050983124842 )--( -1.6180339887498947 , 1.1755705045849465 )
( -0.9270509831248427 , -2.8531695488854605 )--( -1.902113032590307 , 0.6180339887498951 )
( -5.51091059616309e-16 , -3.0 )--( -2.0 , 3.6739403974420594e-16 )
( 0.9270509831248417 , -2.853169548885461 )--( -1.9021130325903073 , -0.6180339887498945 )
( 1.7633557568774187 , -2.427050983124843 )--( -1.6180339887498951 , -1.1755705045849458 )
( 2.427050983124842 , -1.7633557568774196 )--( -1.1755705045849465 , -1.6180339887498947 )
( 2.8531695488854605 , -0.9270509831248428 )--( -0.6180339887498952 , -1.902113032590307 );
\draw[blue]
( 3.0 , 0.0 )--( -0.0 , 2.0 )
( 2.8531695488854605 , 0.9270509831248421 )--( -0.6180339887498948 , 1.902113032590307 )
( 2.4270509831248424 , 1.7633557568774194 )--( -1.1755705045849463 , 1.618033988749895 )
( 1.7633557568774196 , 2.427050983124842 )--( -1.6180339887498947 , 1.1755705045849465 )
( 0.9270509831248424 , 2.8531695488854605 )--( -1.902113032590307 , 0.6180339887498949 )
( 1.8369701987210297e-16 , 3.0 )--( -2.0 , 1.2246467991473532e-16 )
( -0.927050983124842 , 2.853169548885461 )--( -1.9021130325903073 , -0.6180339887498947 )
( -1.7633557568774192 , 2.4270509831248424 )--( -1.618033988749895 , -1.175570504584946 )
( -2.427050983124842 , 1.7633557568774196 )--( -1.1755705045849465 , -1.6180339887498947 )
( -2.8531695488854605 , 0.9270509831248426 )--( -0.618033988749895 , -1.902113032590307 )
( -3.0 , 3.6739403974420594e-16 )--( -2.4492935982947064e-16 , -2.0 )
( -2.853169548885461 , -0.9270509831248419 )--( 0.6180339887498946 , -1.9021130325903073 )
( -2.4270509831248424 , -1.7633557568774192 )--( 1.175570504584946 , -1.618033988749895 )
( -1.7633557568774196 , -2.427050983124842 )--( 1.6180339887498947 , -1.1755705045849465 )
( -0.9270509831248427 , -2.8531695488854605 )--( 1.902113032590307 , -0.6180339887498951 )
( -5.51091059616309e-16 , -3.0 )--( 2.0 , -3.6739403974420594e-16 )
( 0.9270509831248417 , -2.853169548885461 )--( 1.9021130325903073 , 0.6180339887498945 )
( 1.7633557568774187 , -2.427050983124843 )--( 1.6180339887498951 , 1.1755705045849458 )
( 2.427050983124842 , -1.7633557568774196 )--( 1.1755705045849465 , 1.6180339887498947 )
( 2.8531695488854605 , -0.9270509831248428 )--( 0.6180339887498952 , 1.902113032590307 );

\draw[dashed]
( -2.4270509831248424 , -1.7633557568774192 )--( -2.853169548885461 , -0.9270509831248419 )
( 1.175570504584946 , -1.618033988749895 )--( -0.6180339887498946 , 1.9021130325903073 );

\filldraw
( -2.4270509831248424 , -1.7633557568774192 ) circle (8/3 pt)
( -2.853169548885461 , -0.9270509831248419 ) circle (8/3 pt)
( 1.175570504584946 , -1.618033988749895 ) circle (8/3 pt)
( -0.6180339887498946 , 1.9021130325903073 ) circle (8/3 pt);

\end{scope}

\begin{scope}[xshift=140mm]

\draw
(0,-4) node{$A=1$, $B=3$, $C=2$} ;
\draw[red]
( 0.5 , 0.0 )--( 0.0 , -3.0 )
( 0.47552825814757677 , 0.3090169943749474 )--( 0.46352549156242107 , -2.8531695488854605 )
( 0.4045084971874737 , 0.5877852522924731 )--( 0.8816778784387097 , -2.4270509831248424 )
( 0.2938926261462366 , 0.8090169943749473 )--( 1.213525491562421 , -1.7633557568774196 )
( 0.15450849718747373 , 0.9510565162951535 )--( 1.4265847744427302 , -0.9270509831248424 )
( 3.061616997868383e-17 , 1.0 )--( 1.5 , -1.8369701987210297e-16 )
( -0.15450849718747367 , 0.9510565162951536 )--( 1.4265847744427305 , 0.927050983124842 )
( -0.2938926261462365 , 0.8090169943749475 )--( 1.2135254915624212 , 1.7633557568774192 )
( -0.40450849718747367 , 0.5877852522924732 )--( 0.8816778784387098 , 2.427050983124842 )
( -0.47552825814757677 , 0.3090169943749475 )--( 0.4635254915624213 , 2.8531695488854605 )
( -0.5 , 1.2246467991473532e-16 )--( 1.8369701987210297e-16 , 3.0 )
( -0.4755282581475768 , -0.3090169943749473 )--( -0.46352549156242095 , 2.853169548885461 )
( -0.4045084971874737 , -0.587785252292473 )--( -0.8816778784387096 , 2.4270509831248424 )
( -0.2938926261462366 , -0.8090169943749473 )--( -1.213525491562421 , 1.7633557568774196 )
( -0.15450849718747378 , -0.9510565162951535 )--( -1.4265847744427302 , 0.9270509831248427 )
( -9.184850993605148e-17 , -1.0 )--( -1.5 , 5.51091059616309e-16 )
( 0.15450849718747361 , -0.9510565162951536 )--( -1.4265847744427305 , -0.9270509831248417 )
( 0.29389262614623646 , -0.8090169943749476 )--( -1.2135254915624214 , -1.7633557568774187 )
( 0.40450849718747367 , -0.5877852522924732 )--( -0.8816778784387098 , -2.427050983124842 )
( 0.47552825814757677 , -0.3090169943749476 )--( -0.4635254915624214 , -2.8531695488854605 );
\draw[blue]
( 0.5 , 0.0 )--( -0.0 , 3.0 )
( 0.47552825814757677 , 0.3090169943749474 )--( -0.46352549156242107 , 2.8531695488854605 )
( 0.4045084971874737 , 0.5877852522924731 )--( -0.8816778784387097 , 2.4270509831248424 )
( 0.2938926261462366 , 0.8090169943749473 )--( -1.213525491562421 , 1.7633557568774196 )
( 0.15450849718747373 , 0.9510565162951535 )--( -1.4265847744427302 , 0.9270509831248424 )
( 3.061616997868383e-17 , 1.0 )--( -1.5 , 1.8369701987210297e-16 )
( -0.15450849718747367 , 0.9510565162951536 )--( -1.4265847744427305 , -0.927050983124842 )
( -0.2938926261462365 , 0.8090169943749475 )--( -1.2135254915624212 , -1.7633557568774192 )
( -0.40450849718747367 , 0.5877852522924732 )--( -0.8816778784387098 , -2.427050983124842 )
( -0.47552825814757677 , 0.3090169943749475 )--( -0.4635254915624213 , -2.8531695488854605 )
( -0.5 , 1.2246467991473532e-16 )--( -1.8369701987210297e-16 , -3.0 )
( -0.4755282581475768 , -0.3090169943749473 )--( 0.46352549156242095 , -2.853169548885461 )
( -0.4045084971874737 , -0.587785252292473 )--( 0.8816778784387096 , -2.4270509831248424 )
( -0.2938926261462366 , -0.8090169943749473 )--( 1.213525491562421 , -1.7633557568774196 )
( -0.15450849718747378 , -0.9510565162951535 )--( 1.4265847744427302 , -0.9270509831248427 )
( -9.184850993605148e-17 , -1.0 )--( 1.5 , -5.51091059616309e-16 )
( 0.15450849718747361 , -0.9510565162951536 )--( 1.4265847744427305 , 0.9270509831248417 )
( 0.29389262614623646 , -0.8090169943749476 )--( 1.2135254915624214 , 1.7633557568774187 )
( 0.40450849718747367 , -0.5877852522924732 )--( 0.8816778784387098 , 2.427050983124842 )
( 0.47552825814757677 , -0.3090169943749476 )--( 0.4635254915624214 , 2.8531695488854605 );

\draw[dashed]
( 0.47552825814757677 , -0.3090169943749476 )--( -0.15450849718747378 , -0.9510565162951535 )
( 0.4635254915624214 , 2.8531695488854605 )--( -1.4265847744427302 , 0.9270509831248427 );

\filldraw
( 0.47552825814757677 , -0.3090169943749476 ) circle (8/3 pt)
( -0.15450849718747378 , -0.9510565162951535 ) circle (8/3 pt)
( 0.4635254915624214 , 2.8531695488854605 ) circle (8/3 pt)
( -1.4265847744427302 , 0.9270509831248427 ) circle (8/3 pt);

\end{scope}

\end{tikzpicture}
\end{center}
\caption{Intervals from the hyperboloid $x^2/A^2+y^2/B^2-z^2/C^2=1$.} \label{case3a}
\end{figure}
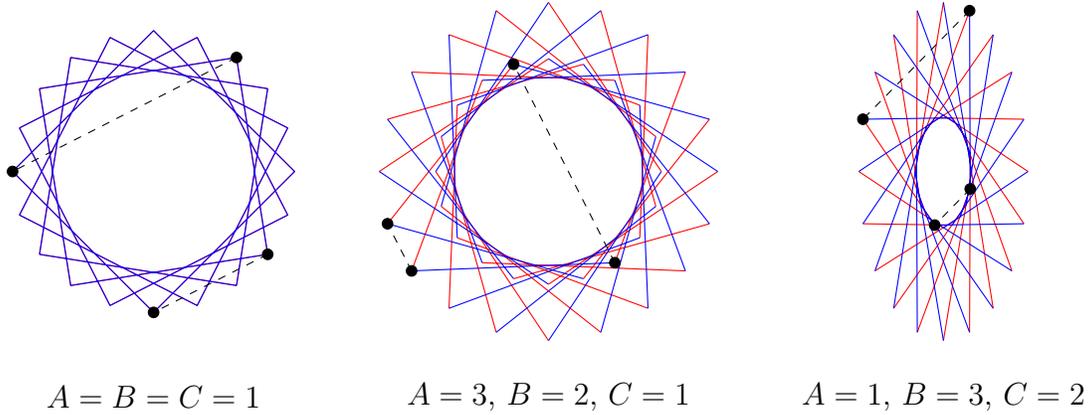

The hyperbolic paraboloid (\ref{parab}) is ruled by the following two families of lines, parameterized by $\lambda \in \mathbb{R}\setminus \{0\}$. 
\[ \begin{pmatrix} A/(2\lambda)\\ -B/(2\lambda)\\ 0 \end{pmatrix} + t \begin{pmatrix} A\lambda/2 \\ B\lambda/2 \\ 1 \end{pmatrix},\quad \text{and} \quad \begin{pmatrix} A/(2\lambda)\\ B/(2\lambda)\\ 0 \end{pmatrix} + t \begin{pmatrix} A\lambda/2 \\ -B\lambda/2 \\ 1 \end{pmatrix}, t \in \mathbb{R}.\]
This corresponds to the following two families of intervals parameterized by $\lambda \in \mathbb{R}\setminus \{0\}$. 
\[ (A\lambda/2,A/(2\lambda);B\lambda/2,-B/(2\lambda)) \quad \text{and} \quad (A\lambda/2,A/(2\lambda);-B\lambda/2,B/(2\lambda)) .\]
The left endpoint of all the above intervals lie on the hyperbola $y = \frac{A^2}{4x}$ and the right endpoint lies on $y = -\frac{B^2}{4x}$. The intervals in the first family intersect the $x$-axis, and the intervals in the second family intersect the $y$-axis. See Figure~\ref{case3b} for an example of intervals coming from hyperbolic paraboloids.

\begin{figure}[H]
\begin{center}
\begin{tikzpicture}[scale=1.4]

\draw[red]
( 0.25 , 1.0 )--( 0.25 , -1.0 )
( 0.5 , 0.5 )--( 0.5 , -0.5 )
( 0.75 , 0.3333333333333333 )--( 0.75 , -0.3333333333333333 )
( 1.0 , 0.25 )--( 1.0 , -0.25 )
( 1.25 , 0.2 )--( 1.25 , -0.2 )
( 1.5 , 0.16666666666666666 )--( 1.5 , -0.16666666666666666 )
( 1.75 , 0.14285714285714285 )--( 1.75 , -0.14285714285714285 )
( 2.0 , 0.125 )--( 2.0 , -0.125 )
( 2.25 , 0.1111111111111111 )--( 2.25 , -0.1111111111111111 )
( 2.5 , 0.1 )--( 2.5 , -0.1 )
( -0.25 , -1.0 )--( -0.25 , 1.0 )
( -0.5 , -0.5 )--( -0.5 , 0.5 )
( -0.75 , -0.3333333333333333 )--( -0.75 , 0.3333333333333333 )
( -1.0 , -0.25 )--( -1.0 , 0.25 )
( -1.25 , -0.2 )--( -1.25 , 0.2 )
( -1.5 , -0.16666666666666666 )--( -1.5 , 0.16666666666666666 )
( -1.75 , -0.14285714285714285 )--( -1.75 , 0.14285714285714285 )
( -2.0 , -0.125 )--( -2.0 , 0.125 )
( -2.25 , -0.1111111111111111 )--( -2.25 , 0.1111111111111111 )
( -2.5 , -0.1 )--( -2.5 , 0.1 );
\draw[blue]
( 2.5 , 0.1 )--( -2.5 , 0.1 )
( 1.25 , 0.2 )--( -1.25 , 0.2 )
( 0.8333333333333334 , 0.3 )--( -0.8333333333333334 , 0.3 )
( 0.625 , 0.4 )--( -0.625 , 0.4 )
( 0.5 , 0.5 )--( -0.5 , 0.5 )
( 0.4166666666666667 , 0.6 )--( -0.4166666666666667 , 0.6 )
( 0.35714285714285715 , 0.7 )--( -0.35714285714285715 , 0.7 )
( 0.3125 , 0.8 )--( -0.3125 , 0.8 )
( 0.2777777777777778 , 0.8999999999999999 )--( -0.2777777777777778 , 0.8999999999999999 )
( 0.25 , 1.0 )--( -0.25 , 1.0 )
( -2.5 , -0.1 )--( 2.5 , -0.1 )
( -1.25 , -0.2 )--( 1.25 , -0.2 )
( -0.8333333333333334 , -0.3 )--( 0.8333333333333334 , -0.3 )
( -0.625 , -0.4 )--( 0.625 , -0.4 )
( -0.5 , -0.5 )--( 0.5 , -0.5 )
( -0.4166666666666667 , -0.6 )--( 0.4166666666666667 , -0.6 )
( -0.35714285714285715 , -0.7 )--( 0.35714285714285715 , -0.7 )
( -0.3125 , -0.8 )--( 0.3125 , -0.8 )
( -0.2777777777777778 , -0.8999999999999999 )--( 0.2777777777777778 , -0.8999999999999999 )
( -0.25 , -1.0 )--( 0.25 , -1.0 );

\draw
(0,-1.6) node{$A=1$, $B=2$} ;

\draw[dashed]
( 1.0 , 0.25 )--( 0.3125 , 0.8 )
( 1.0 , -0.25 )--( -0.3125 , 0.8 );

\filldraw
( 1.0 , 0.25 ) circle (2/1.4 pt)
( 0.3125 , 0.8 ) circle (2/1.4 pt)
( 1.0 , -0.25 ) circle (2/1.4 pt)
( -0.3125 , 0.8 ) circle (2/1.4 pt);

\begin{scope}[xshift=60mm]

\draw[red]
( 0.15 , 0.6 )--( 0.3 , -1.2 )
( 0.3 , 0.3 )--( 0.6 , -0.6 )
( 0.44999999999999996 , 0.19999999999999998 )--( 0.8999999999999999 , -0.39999999999999997 )
( 0.6 , 0.15 )--( 1.2 , -0.3 )
( 0.75 , 0.12 )--( 1.5 , -0.24 )
( 0.8999999999999999 , 0.09999999999999999 )--( 1.7999999999999998 , -0.19999999999999998 )
( 1.05 , 0.08571428571428572 )--( 2.1 , -0.17142857142857143 )
( 1.2 , 0.075 )--( 2.4 , -0.15 )
( 1.3499999999999999 , 0.06666666666666667 )--( 2.6999999999999997 , -0.13333333333333333 )
( 1.5 , 0.06 )--( 3.0 , -0.12 )
( -0.15 , -0.6 )--( -0.3 , 1.2 )
( -0.3 , -0.3 )--( -0.6 , 0.6 )
( -0.44999999999999996 , -0.19999999999999998 )--( -0.8999999999999999 , 0.39999999999999997 )
( -0.6 , -0.15 )--( -1.2 , 0.3 )
( -0.75 , -0.12 )--( -1.5 , 0.24 )
( -0.8999999999999999 , -0.09999999999999999 )--( -1.7999999999999998 , 0.19999999999999998 )
( -1.05 , -0.08571428571428572 )--( -2.1 , 0.17142857142857143 )
( -1.2 , -0.075 )--( -2.4 , 0.15 )
( -1.3499999999999999 , -0.06666666666666667 )--( -2.6999999999999997 , 0.13333333333333333 )
( -1.5 , -0.06 )--( -3.0 , 0.12 );
\draw[blue]
( 1.5 , 0.06 )--( -3.0 , 0.12 )
( 0.75 , 0.12 )--( -1.5 , 0.24 )
( 0.5 , 0.18 )--( -1.0 , 0.36 )
( 0.375 , 0.24 )--( -0.75 , 0.48 )
( 0.3 , 0.3 )--( -0.6 , 0.6 )
( 0.25 , 0.36 )--( -0.5 , 0.72 )
( 0.21428571428571427 , 0.42 )--( -0.42857142857142855 , 0.84 )
( 0.1875 , 0.48 )--( -0.375 , 0.96 )
( 0.16666666666666666 , 0.5399999999999999 )--( -0.3333333333333333 , 1.0799999999999998 )
( 0.15 , 0.6 )--( -0.3 , 1.2 )
( -1.5 , -0.06 )--( 3.0 , -0.12 )
( -0.75 , -0.12 )--( 1.5 , -0.24 )
( -0.5 , -0.18 )--( 1.0 , -0.36 )
( -0.375 , -0.24 )--( 0.75 , -0.48 )
( -0.3 , -0.3 )--( 0.6 , -0.6 )
( -0.25 , -0.36 )--( 0.5 , -0.72 )
( -0.21428571428571427 , -0.42 )--( 0.42857142857142855 , -0.84 )
( -0.1875 , -0.48 )--( 0.375 , -0.96 )
( -0.16666666666666666 , -0.5399999999999999 )--( 0.3333333333333333 , -1.0799999999999998 )
( -0.15 , -0.6 )--( 0.3 , -1.2 );

\draw
(0,-1.6) node{$A=1$, $B=2$} ;

\draw[dashed]
( -1.05 , -0.08571428571428572 )--( 0.375 , 0.24 )
( -2.1 , 0.17142857142857143 )--( -0.75 , 0.48 );

\filldraw
( -1.05 , -0.08571428571428572 ) circle (2/1.4 pt)
( 0.375 , 0.24 ) circle (2/1.4 pt)
( -2.1 , 0.17142857142857143 ) circle (2/1.4 pt)
( -0.75 , 0.48 ) circle (2/1.4 pt);

\end{scope}

\end{tikzpicture}
\end{center}
\caption{Intervals from the hyperbolic paraboloid $z=x^2/A^2-y^2/B^2$}
\label{case3b}
\end{figure}
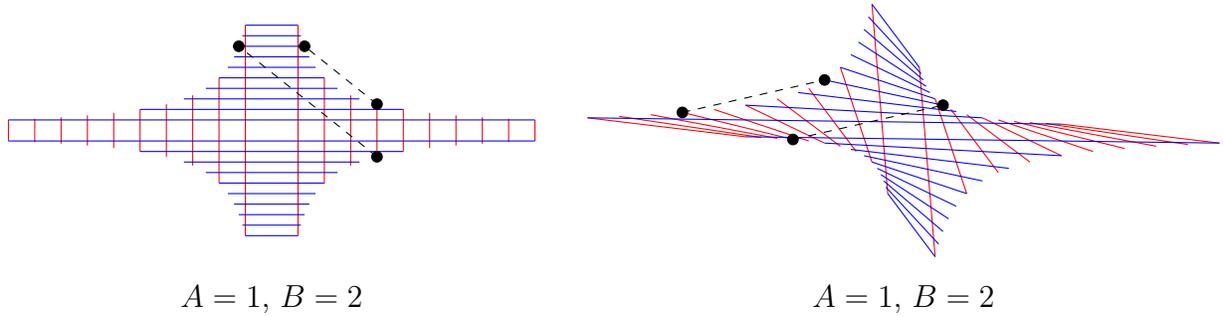


It is easy to understand the effect of translations in $\mathbb{R}^3$ on sets of intervals in the plane, and by this we can completely describe the sets of intervals corresponding to axis parallel reguli in $\mathbb{R}^3$. A translation by the vector $(p , q, r) \in \mathbb{R}^3$ maps the line $(b,d,0)+t(a,c,1)$ to $(b+p,c+q,r)+t(a,c,1)$. Hence the corresponding transformation on intervals maps $(a,b;c,d)$ to $(a,b+p-ra;c,d+q-rc)$. The effect of the translation by $(p,q,r)$ in $\mathbb{R}^3$ on $\mathfrak{I}$ can therefore be described by the composition of three basic maps. First, a vertical shift of all left endpoints of intervals in $\mathfrak{I}$ by $p$. Second, a vertical shift of all right endpoints of intervals in $\mathfrak{I}$ by $q$. Third, an affine shear transformation acting on all of $\mathbb{R}^2$ by a factor $r$. See Figure~\ref{shiftedhyper} for an example of a set of intervals resulting from a translation of a hyperboloid, given by translating the hyperboloid of the form (\ref{hyper}) with $A=2,B=1,C=1/2$ by $(2,0,1/2)$.

Evidently, $\mathbb{R}^3$ translations do not change the type of conic that the endpoints lie on, so it is now clear that axis parallel reguli only produce intervals with endpoints lying on either ellipses or hyperbolas. In what follows, we will see that by rotating the axis parallel reguli it is also possible to produce parabolas and pairs of lines (which is a degenerate conic), thereby showing that each type of conic can be realised in this way. \\

If an interval is considered as a point in $\mathbb{R}^4$, then $\mathbb{R}^3$ translations induce an affine transformation on $\mathbb{R}^4$. The effect of $\mathbb{R}^3$ rotations on $\mathfrak{I}$ is more complicated: the map induced by rotations of $\mathbb{R}^3$ is in general not affine. Rotations in $\mathbb{R}^3$ are described by well-known matrices. For example, calculation with such a matrix shows that rotation around the $x$-axis by an angle $\alpha$ induces the map
\[ (a,b;c,d) \mapsto \left(\frac{a}{c\sin \alpha + \cos \alpha }, \frac{(bc-ad)\sin \alpha + b \cos \alpha}{c \sin \alpha + \cos \alpha}; \frac{c\cos \alpha - \sin \alpha}{c \sin \alpha + \cos \alpha}, \frac{d}{c \sin \alpha + \cos \alpha} \right) ,\]
on intervals in $\mathbb{R}^2$. Thus, the type of conic containing the endpoints of the intervals is in general not preserved under these transformations, and indeed there are examples where parabolas and pairs of lines arise as a result of rotating the axis parallel reguli -- see Figure~\ref{3e}.\\

\begin{figure}[h!]
\begin{center}
\begin{tikzpicture}[scale=0.25]

\begin{scope}[xshift=-300mm, yshift = 80mm]

\filldraw[blue]

( 1 , -0.75 )--( 1.5 , -0.5625 )
( 1 , -1.5 )--( 3.0 , -2.25 )
( 1 , -2.25 )--( 4.5 , -5.0625 )
( 1 , -3.0 )--( 6.0 , -9.0 )
( 1 , -3.75 )--( 7.5 , -14.0625 )
( 1 , -4.5 )--( 9.0 , -20.25 )
( 1 , 0.0 )--( -0.0 , -0.0 )
( 1 , 0.75 )--( -1.5 , -0.5625 )
( 1 , 1.5 )--( -3.0 , -2.25 )
( 1 , 2.25 )--( -4.5 , -5.0625 )
( 1 , 3.0 )--( -6.0 , -9.0 )
( 1 , 3.75 )--( -7.5 , -14.0625 )
( 1 , 4.5 )--( -9.0 , -20.25 );

\filldraw[red]

( -1 , -0.75 )--( -1.5 , -0.5625 )
( -1 , -1.5 )--( -3.0 , -2.25 )
( -1 , -2.25 )--( -4.5 , -5.0625 )
( -1 , -3.0 )--( -6.0 , -9.0 )
( -1 , -3.75 )--( -7.5 , -14.0625 )
( -1 , -4.5 )--( -9.0 , -20.25 )
( -1 , 0.0 )--( 0.0 , -0.0 )
( -1 , 0.75 )--( 1.5 , -0.5625 )
( -1 , 1.5 )--( 3.0 , -2.25 )
( -1 , 2.25 )--( 4.5 , -5.0625 )
( -1 , 3.0 )--( 6.0 , -9.0 )
( -1 , 3.75 )--( 7.5 , -14.0625 )
( -1 , 4.5 )--( 9.0 , -20.25 );

\draw[dashed]
( -1 , -3.75 )--( 1 , -1.5 )
( -7.5 , -14.0625 )--( 3.0 , -2.25 );

\filldraw
( -1 , -3.75 ) circle (8pt)
( 1 , -1.5 ) circle (8pt)
( -7.5 , -14.0625 ) circle (8pt)
( 3.0 , -2.25 ) circle (8pt);

\end{scope}

\filldraw[blue]

( -0.35355339059327373 , -0.13258252147247762 )--( 1.7677669529663687 , -0.9280776503073436 )
( -1.414213562373095 , 0.5303300858899107 )--( 2.82842712474619 , -2.6516504294495533 )
( -2.4748737341529163 , 1.9887378220871645 )--( 3.8890872965260113 , -5.170718337426628 )
( -3.5355339059327373 , 4.242640687119286 )--( 4.949747468305832 , -8.48528137423857 )
( -4.596194077712559 , 7.29203868098627 )--( 6.010407640085653 , -12.595339539885376 )
( 0.7071067811865475 , 0.0 )--( 0.7071067811865475 , 0.0 )
( 1.7677669529663687 , 0.9280776503073436 )--( -0.35355339059327373 , 0.13258252147247762 )
( 2.82842712474619 , 2.6516504294495533 )--( -1.414213562373095 , -0.5303300858899107 )
( 3.8890872965260113 , 5.170718337426628 )--( -2.4748737341529163 , -1.9887378220871645 )
( 4.949747468305832 , 8.48528137423857 )--( -3.5355339059327373 , -4.242640687119286 )
( 6.010407640085653 , 12.595339539885376 )--( -4.596194077712559 , -7.29203868098627 );
\filldraw[red]
( -0.7071067811865475 , 0.0 )--( -0.7071067811865475 , -0.0 )
( 0.35355339059327373 , -0.13258252147247762 )--( -1.7677669529663687 , -0.9280776503073436 )
( 1.414213562373095 , 0.5303300858899107 )--( -2.82842712474619 , -2.6516504294495533 )
( 2.4748737341529163 , 1.9887378220871645 )--( -3.8890872965260113 , -5.170718337426628 )
( 3.5355339059327373 , 4.242640687119286 )--( -4.949747468305832 , -8.48528137423857 )
( 4.596194077712559 , 7.29203868098627 )--( -6.010407640085653 , -12.595339539885376 )
( -0.7071067811865475 , 0.0 )--( -0.7071067811865475 , 0.0 )
( -1.7677669529663687 , 0.9280776503073436 )--( 0.35355339059327373 , 0.13258252147247762 )
( -2.82842712474619 , 2.6516504294495533 )--( 1.414213562373095 , -0.5303300858899107 )
( -3.8890872965260113 , 5.170718337426628 )--( 2.4748737341529163 , -1.9887378220871645 )
( -4.949747468305832 , 8.48528137423857 )--( 3.5355339059327373 , -4.242640687119286 )
( -6.010407640085653 , 12.595339539885376 )--( 4.596194077712559 , -7.29203868098627 );

\draw[dashed]
( -4.949747468305832 , 8.48528137423857 )--( 3.8890872965260113 , 5.170718337426628 )
( 3.5355339059327373 , -4.242640687119286 )--( -2.4748737341529163 , -1.9887378220871645 );

\filldraw
( -4.949747468305832 , 8.48528137423857 ) circle (8pt)
( 3.8890872965260113 , 5.170718337426628 ) circle (8pt)
( 3.5355339059327373 , -4.242640687119286 ) circle (8pt)
( -2.4748737341529163 , -1.9887378220871645 ) circle (8pt);

\end{tikzpicture}
\end{center}
\caption{Rotating the hyperbolic paraboloid $z=x^2-y^2$ by $\pi/2$ around the $x$-axis produces intervals with endpoints on the lines $x=\pm1$ and the parabola $y=-\frac{x^2}{4}$; applying a further rotation by $\pi/4$ around the $z$-axis gives intervals with endpoints on the parabolas $y=\pm\frac{2x^2-1}{4\sqrt{2}}$}
\label{3e}
\end{figure}
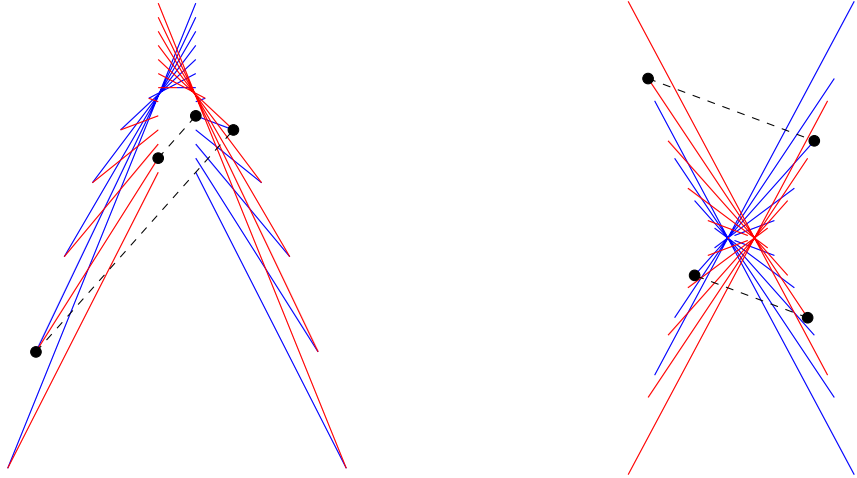

All the Case 3 examples that we have seen so far belong to Subcase (i). We finish this section with an example from Subcase (ii). Recall that we want a family of intervals $\{(a_i,b_i;c_i,d_i) \}_i$ such that $(a_i,b_i)$ and $(c_i,d_i)$ lie on lines, and $(a_i,c_i)$ lie on a conic, for all $i$. By rotating and scaling, we assume that $(a_i,b_i)$ lie on the line $y=x$ and $(a_i,c_i)$ lie on the hyperbola $y=1/x$. These choices determine the following two families of intervals 
\begin{equation*}
   \{ (t,t;1/t,u/t+v)  \colon t \in \mathbb{R}\} \quad \text{and} \quad  \{ (t,ut;1/t,v+1/t) \colon t \in \mathbb{R} \} ,
\end{equation*}
where $u,v \in \mathbb{R}$, and $u \neq 1$. These intervals correspond to lines belonging to the hyperboloid 
\begin{equation}\label{linereg}
xy = z^2+z(u+1)+u+vx.
\end{equation}
When $u=1$, the two families are identical, and (\ref{linereg}) is a cone. See Figure~\ref{case3lines} for a drawing of this case.

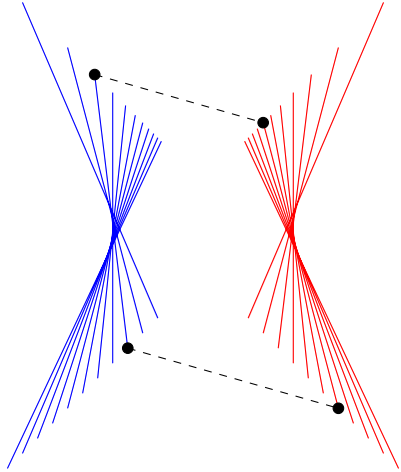
\begin{figure}[h!]
\begin{center}
\begin{tikzpicture}[scale=1.2]

\filldraw[blue]

( -0.5 , -0.5 )--( -2.0 , 3.0 )
( -0.6666666666666666 , -0.6666666666666666 )--( -1.5 , 2.5 )
( -0.8333333333333334 , -0.8333333333333334 )--( -1.2 , 2.2 )
( -1.0 , -1.0 )--( -1.0 , 2.0 )
( -1.1666666666666667 , -1.1666666666666667 )--( -0.8571428571428571 , 1.8571428571428572 )
( -1.3333333333333333 , -1.3333333333333333 )--( -0.75 , 1.75 )
( -1.5 , -1.5 )--( -0.6666666666666666 , 1.6666666666666665 )
( -1.6666666666666667 , -1.6666666666666667 )--( -0.6 , 1.6 )
( -1.8333333333333333 , -1.8333333333333333 )--( -0.5454545454545455 , 1.5454545454545454 )
( -2.0 , -2.0 )--( -0.5 , 1.5 )
( -2.1666666666666665 , -2.1666666666666665 )--( -0.46153846153846156 , 1.4615384615384617 );
\filldraw[red]
( 0.5 , -0.5 )--( 2.0 , 3.0 )
( 0.6666666666666666 , -0.6666666666666666 )--( 1.5 , 2.5 )
( 0.8333333333333334 , -0.8333333333333334 )--( 1.2 , 2.2 )
( 1.0 , -1.0 )--( 1.0 , 2.0 )
( 1.1666666666666667 , -1.1666666666666667 )--( 0.8571428571428571 , 1.8571428571428572 )
( 1.3333333333333333 , -1.3333333333333333 )--( 0.75 , 1.75 )
( 1.5 , -1.5 )--( 0.6666666666666666 , 1.6666666666666665 )
( 1.6666666666666667 , -1.6666666666666667 )--( 0.6 , 1.6 )
( 1.8333333333333333 , -1.8333333333333333 )--( 0.5454545454545455 , 1.5454545454545454 )
( 2.0 , -2.0 )--( 0.5 , 1.5 )
( 2.1666666666666665 , -2.1666666666666665 )--( 0.46153846153846156 , 1.4615384615384617 );

\draw[dashed]
( -0.8333333333333334 , -0.8333333333333334 )--( 1.5 , -1.5 )
( -1.2 , 2.2 )--( 0.6666666666666666 , 1.6666666666666665 );

\filldraw
( -0.8333333333333334 , -0.8333333333333334 ) circle (5/3 pt)
( 1.5 , -1.5 ) circle (5/3 pt)
( -1.2 , 2.2 ) circle (5/3 pt)
( 0.6666666666666666 , 1.6666666666666665 ) circle (5/3 pt);

\end{tikzpicture}
\end{center}
\caption{The hyperboloid (\ref{linereg}) with $u=-1,v=1$, produces intervals with endpoints on two pairs of lines}
\label{case3lines}
\end{figure}

\section{Orthodiagonal quadrilaterals}

An orthodiagonal quadrilateral is a convex quadrilateral with perpendicular diagonals. Several geometric and arithmetic characterizations of orthodiagonal quadrilaterals are known. For example, a convex quadrilateral is orthodiagonal if and only if the midpoints of the sides are the vertices of a rectangle. Another well known characterization is that the sum of the lengths of opposite sides is equal -- see for example \cite{J} and the references contained therein.

Our proof of Theorem~\ref{maint} can easily be modified to deal with some variants of the the problem we have considered. An example is sets of intervals for which there are many pairs \emph{forming orthodiagonal quadrilaterals}, meaning that the convex hull of the two intervals has perpendicular diagonals. This property is illustrated in the leftmost diagram of Figure~\ref{perpint}. Arithmetically, two intervals $(a,b;c,d)$, $(a',b';c','d)$ forming an orthodiagonal quadrilateral satisfy 
\begin{equation}\label{perpprop}(b-b')(d-d')=-(a-a')(c-c') \quad \text{or} \quad (b-d')(d-b')=-(a-c')(c-a'). \end{equation}
The arithmetic conditions (\ref{perpprop}) are not exclusive to orthodiagonal quadrilaterals, i.e. other pairs of intervals can satisfy one or both of (\ref{perpprop}), and we illustrate such possibilities in Figure~\ref{perpint}.

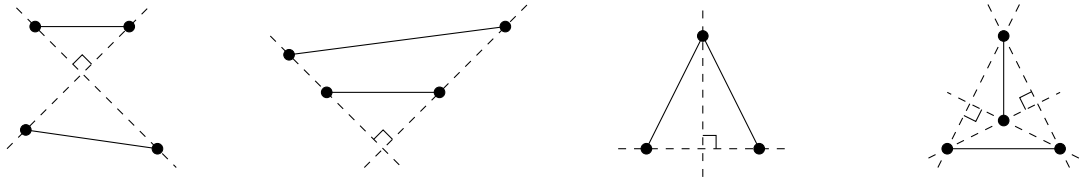
\begin{figure}[h!]
\begin{center}
\begin{tikzpicture}[scale=0.5]

\begin{scope}[yshift = 2cm]

\draw

(-1.25,1.25)--(1.25,1.25)
(-1.5,-1.5)--(2,-2);

\filldraw

(2,-2) circle (4pt)
(-1.5,-1.5) circle (4pt)
(-1.25,1.25) circle (4pt)
(1.25,1.25) circle (4pt);

\draw

(-0.25,0.25)--(0,0.5)--(0.25,0.25);

\draw[dashed]

(-1.75,1.75)--(2.5,-2.5)
(-2,-2)--(1.75,1.75);

\end{scope}
\begin{scope}[xshift= 80mm]

\draw

(-2.5,2.5)--(3.25,3.25)
(-1.5,1.5)--(1.5,1.5);

\filldraw

(-2.5,2.5) circle (4pt)
(3.25,3.25) circle (4pt)
(-1.5,1.5) circle (4pt)
(1.5,1.5) circle (4pt);

\draw

(-0.25,0.25)--(0,0.5)--(0.25,0.25);

\draw[dashed]

(-3,3)--(0.5,-0.5)
(-0.5,-0.5)--(4,4);

\end{scope}

 \begin{scope}[xshift=150mm]

\filldraw
(0,0) circle (4pt)
(3,0) circle (4pt)
(1.5,3) circle (4pt);
\draw
(0,0)--(1.5,3)--(3,0);
\draw[dashed]
(1.5,-0.75)--(1.5,3.75)
(-0.75,0)--(3.75,0);

\draw

(1.5,0.3535533905932738)--(1.8535533905932738,0.3535533905932738)--(1.8535533905932738,0);

  \end{scope}

 \begin{scope}[xshift=230mm]
 
\def\a{3}
\def\m{2}
\def\n{0.5}

\filldraw
(0,0) circle (4pt)
(\a,0) circle (4pt)
({\a/(\m*\n+1)},{\a*\m/(\m*\n+1)}) circle (4pt)
({\a/(\m*\n+1)},{\a*\n/(\m*\n+1)}) circle (4pt);
\draw 
(0,0)--(\a,0)
({\a/(\m*\n+1)},{\a*\m/(\m*\n+1)})--({\a/(\m*\n+1)},{\a*\n/(\m*\n+1)});
\draw[dashed]
(-0.5,-0.25)--(3,1.5)
(-0.25,-0.5)--(2,4)
(3.5,-0.25)--(0,1.5)
(3.25,-0.5)--(1,4);

\draw

(0.9162277660168379, 1.041886116991581)--(0.758113883008419, 0.7256583509747431)--(0.441886116991581, 0.883772233983162);
 
\draw
(2.083772233983162, 1.041886116991581)--(1.925658350974743, 1.358113883008419)--(2.241886116991581, 1.516227766016838);

  \end{scope}

\end{tikzpicture}
\end{center}
\caption{These four pair of intervals all satisfy (\ref{perpprop}) but only the first forms an orthodiagonal quadrilateral} \label{perpint}
\end{figure}

The similarity of (\ref{perpprop}) to (\ref{para}) allows a reuse of the previous techniques to create a result on orthodiagonal quadrilaterals, similar to Theorem~\ref{maint}. One notable difference is that two intervals coming from two different rulings of reguli may form any of the arrangements in Figure~\ref{perpint}, instead of exclusively forming orthodiagonal quadrilaterals.

\begin{theorem}\label{orthot} Let $\mathfrak{I}$ be a set of $N$ distinct intervals in $\mathbb{R}^2$. If more than $N^{3/2}\log N$ pairs of intervals form orthodiagonal quadrilaterals, then one of the following holds. 
\begin{enumerate}[1.]
\item There are two perpendicular lines in $\mathbb{R}^2$ such that $\gtrsim N^{1/2}$ intervals have an endpoint on each line.
\item There are two perpendicular lines $\ell_1,\ell_2 \subset \mathbb{R}^2$ such that $\gtrsim N^{1/2}$ intervals $(a,b;c,d) \in \mathfrak{I}$ satisfy $(a,c) \in \ell_1$ and $(b,d) \in \ell_2$.
\item There are two subsets $\mathfrak{I}_1,\mathfrak{I}_2 \subset \mathfrak{I}$ such that for any $i_1 \in \mathfrak{I}_1$ and any $i_2 \in \mathfrak{I}_2$, the intervals $i_1,i_2$ satisfy an equation in (\ref{perpprop}). In addition, $|\mathfrak{I}_1||\mathfrak{I}_2| \gtrsim N$. 

\end{enumerate}
\end{theorem}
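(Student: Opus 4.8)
The plan is to mimic the proof of Theorem~\ref{maint} almost verbatim, replacing the trapezoid condition with the orthodiagonal condition (\ref{perpprop}) and adjusting the line--interval correspondence accordingly. First I would define a modified map $\mathcal{L}'$ from intervals in $\mathbb{R}^2$ to lines in $\mathbb{R}^3$: to the interval $(a,b;c,d)$ associate the line
\[ \mathcal{L}'(a,b;c,d) = \left\{ \begin{pmatrix} b \\ d \\ 0 \end{pmatrix} + t \begin{pmatrix} a \\ -c \\ 1 \end{pmatrix} \colon t \in \mathbb{R} \right\}, \]
so that the lines $\mathcal{L}'(a,b;c,d)$ and $\mathcal{L}'(a',b';c',d')$ intersect exactly when there is a common solution $t$ to $t(a-a') = b'-b$ and $-t(c-c') = d'-d$, i.e. precisely when $(a-a')(d-d') = -(b-b')(c-c')$, which is the left equation of (\ref{perpprop}). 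The exceptional parallel case ($a=a'$, $c=c'$) is handled exactly as in Lemma~\ref{corresp}, by a generic rotation, or by passing to projective space. One checks as before that the other equation of (\ref{perpprop}) corresponds instead to the intersection of $\mathcal{L}'(c,d;a,b)$ with $\mathcal{L}'(a',b';c',d')$, so the direction-doubling trick of Lemma~\ref{i2l} carries over unchanged: form the $2N$ lines $\mathfrak{L}' = \{\mathcal{L}'(a_i,b_i;c_i,d_i)\} \cup \{\mathcal{L}'(c_i,d_i;a_i,b_i)\}$, and the number of intersecting pairs in $\mathfrak{L}'$ equals $2|T'| + N$ where $T'$ is the (multiplicity-counted) set of orthodiagonal pairs.

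Next I would apply the Guth--Katz theorem (Theorem~\ref{tgk}) to $\mathfrak{L}'$: if more than $N^{3/2}\log N$ pairs of intervals form orthodiagonal quadrilaterals, then $\gtrsim N^{1/2}$ lines of $\mathfrak{L}'$ are concurrent, or lie in a plane, or a regulus carries $\gtrsim N$ intersecting pairs. Pulling each case back through $\mathcal{L}'$ gives the three conclusions. For the concurrent case: lines through $(u,v,w)$ have the form $[u - aw, v + cw, 0] + t[a, -c, 1]$, which forces $(a,b)$ onto the line $y = u - wx$ and $(c,d)$ onto $y = v + wx$; these two lines have slopes $-w$ and $+w$. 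Here one must note that two nonvertical lines of slopes $-w$ and $w$ are perpendicular iff $-w \cdot w = -1$, i.e. $w = \pm 1$; so to get \emph{perpendicular} lines as claimed in item~1 one should phrase the correspondence so the concurrency point's third coordinate can be normalized, or more simply observe that one of the endpoint-lines can be taken to have slope $s$ and the other slope $-1/s$ after absorbing constants — I would set up $\mathcal{L}'$ (e.g. with a scaling of the third coordinate, or by reading off slopes $s$ and $-1/s$) so that the two pullback lines come out genuinely perpendicular, which is where a small amount of care is needed. For the coplanar case: a line $[b,d,0] + t[a,-c,1]$ lies in $Ax+By+Cz+D=0$ iff $Aa - Bc + C = 0$ and $Ab + Bd + D = 0$, so $(a,c)$ lies on the line $Ax - By + C = 0$ and $(b,d)$ on $Ax + By + D = 0$; these two lines have normal directions $(A,-B)$ and $(A,B)$, whose dot product is $A^2 - B^2$, so again perpendicularity needs $A = \pm B$ — the same normalization fix applies. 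For the regulus case the argument is identical to Theorem~\ref{maint}: the bipartite intersection structure of a regulus gives $\mathfrak{I}_1, \mathfrak{I}_2$ with $|\mathfrak{I}_1||\mathfrak{I}_2| \gtrsim N$ such that every cross pair satisfies an equation of (\ref{perpprop}), with no perpendicularity to verify.

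The main obstacle, as flagged above, is getting the \emph{perpendicularity} to fall out cleanly in cases 1 and 2. In the trapezoid version the pullback lines were automatically parallel because both endpoint-lines had the \emph{same} slope $-w$ (resp. the same normal $(A,B)$); with the sign change in the $z$-direction they come out with slopes $-w$ and $+w$ (resp. normals $(A,-B)$ and $(A,B)$), and these are perpendicular precisely in the normalized situation. The clean fix is to choose the correspondence $\mathcal{L}'(a,b;c,d) = \{[b,d,0] + t[a,-c,1]\}$ together with the convention that we only ever speak of lines in $\mathbb{R}^3$ up to the scaling $(x,y,z) \mapsto (x, y, \lambda z)$ of the ambient space (which does not affect incidences among lines), and then absorb the scale so that the two resulting planar lines are orthogonal; alternatively one can simply state item~1 as "two lines of slopes $s$ and $-1/s$" which is the same as perpendicular. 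I would write the lemma statement for $\mathcal{L}'$ carefully so that "intervals forming an orthodiagonal quadrilateral $\iff$ images under $\mathcal{L}'$ intersect" holds on the nose, and then each of the three pullbacks is a short computation exactly paralleling Section~2; no new ideas beyond the correspondence are required, and the only subtlety is bookkeeping the slopes/normals to read off "perpendicular" rather than "parallel."
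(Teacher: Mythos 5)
Your overall strategy (double each interval with its reverse, apply Guth--Katz, pull back the three cases) is the same as the paper's, but your correspondence is wrong, and this is not a cosmetic issue. With your map $\mathcal{L}'(a,b;c,d)=\{[b,d,0]+t[a,-c,1]\}$, eliminating $t$ from $t(a-a')=b'-b$ and $t(c-c')=d-d'$ gives
\[(a-a')(d-d')=-(b-b')(c-c'),\]
whereas the left equation of (\ref{perpprop}) is $(b-b')(d-d')=-(a-a')(c-c')$; these differ (swap the roles of $c$ and $d$ to pass from one to the other). Your condition says the segment joining the first endpoints and the segment joining the second endpoints have slopes that are \emph{negatives} of each other; it is exactly the $\rho=-1$ instance of the map $\mathcal{L}^{\rho}$ from the end of Section 4, not orthogonality. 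Concretely, the intervals $(0,0;1,1)$ and $(1,0;1,2)$ are orthodiagonal in the sense of (\ref{perpprop}) but your lines do not meet, so the count you feed into Guth--Katz is not the number of orthodiagonal pairs. The paper instead uses $\mathcal{L}^{\perp}(a,b;c,d)=\{[b,-a,0]+t[c,d,1]\}$: one endpoint is rotated by $90^{\circ}$ via $(a,b)\mapsto(b,-a)$ and placed in the base point while the other endpoint becomes the direction; eliminating $t$ from $t(c-c')=b'-b$ and $t(d-d')=a-a'$ then yields precisely $(b-b')(d-d')=-(a-a')(c-c')$.

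The difficulty you flag in Cases 1 and 2 --- that the pullback lines come out with slopes $-w$ and $+w$, or normals $(A,-B)$ and $(A,B)$, rather than perpendicular --- is a symptom of this error rather than a bookkeeping subtlety, and neither of your proposed fixes repairs it. An invertible linear change of coordinates of $\mathbb{R}^3$ (in particular rescaling the $z$-axis) preserves which pairs of lines intersect, so it cannot change the set of interval pairs being counted; and lines of slopes $s$ and $-s$ are perpendicular only when $s=\pm1$, which is not the same as slopes $s$ and $-1/s$. With the correct map $\mathcal{L}^{\perp}$ the perpendicularity emerges without any normalization: for instance, a line $[b,-a,0]+t[c,d,1]$ lies in the plane $Ax+By+Cz+D=0$ exactly when $Ac+Bd+C=0$ and $Ab-Ba+D=0$, placing $(c,d)$ and $(a,b)$ on lines with normals $(A,B)$ and $(-B,A)$, which are genuinely orthogonal. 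So the correction you need is to replace the reflection in your direction vector by the paper's rotation of one endpoint; the rest of your outline (the doubling trick, the application of Theorem~\ref{tgk}, and the regulus case, which is insensitive to which bilinear form is used) then goes through.
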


In order to prove this theorem, one maps the interval $(a,b,c,d)$ to the line
\begin{equation*}\label{Lperp} \mathcal{L^{\perp}}(a,b;c,d) = \left \{ \begin{pmatrix} b \\ -a \\ 0  \end{pmatrix} + t\begin{pmatrix} c \\ d \\ 1  \end{pmatrix} \colon t \in \mathbb{R} \right\} \subset \mathbb{R}^3 \end{equation*}
instead of using the map $\mathcal{L}$ above. Under this alternative correspondence, a pair of lines $\mathcal{L^{\perp}}(a,b;c,d)$ and $\mathcal{L^{\perp}}(a',b';c',d')$ intersect precisely when the left equation of (\ref{perpprop}) is satisfied. In Figure~\ref{perppull} we showcase two instances of sets of intervals resulting from pulling back rulings of reguli by $\mathcal{L}^\perp$.\\

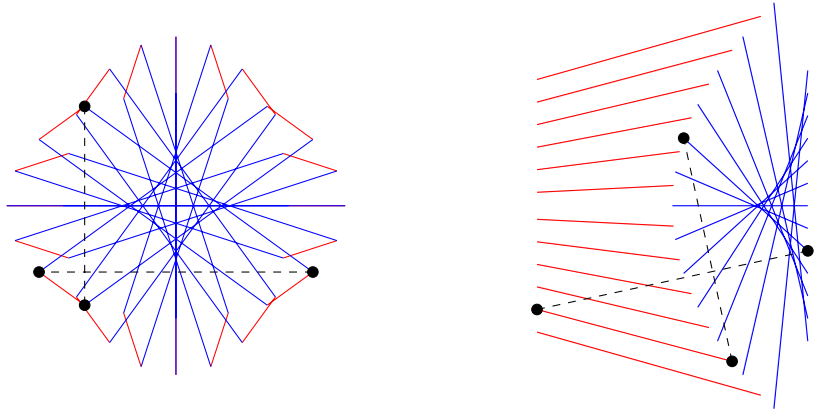
\begin{figure}[H]
\begin{center}
\begin{tikzpicture}[scale=0.3]

\draw[red]
( 5.0 , 0.0 )--( 7.5 , 0.0 )
( 4.755282581475767 , 2.3176274578121054 )--( 7.132923872213651 , 1.545084971874737 )
( 4.045084971874737 , 4.408389392193548 )--( 6.067627457812106 , 2.938926261462366 )
( 2.9389262614623664 , 6.0676274578121046 )--( 4.408389392193549 , 4.045084971874736 )
( 1.5450849718747373 , 7.132923872213651 )--( 2.317627457812106 , 4.755282581475767 )
( 3.061616997868383e-16 , 7.5 )--( 4.592425496802574e-16 , 5.0 )
( -1.5450849718747368 , 7.132923872213652 )--( -2.317627457812105 , 4.755282581475768 )
( -2.938926261462365 , 6.067627457812106 )--( -4.408389392193548 , 4.045084971874737 )
( -4.045084971874736 , 4.408389392193549 )--( -6.0676274578121046 , 2.9389262614623664 )
( -4.755282581475767 , 2.3176274578121063 )--( -7.132923872213651 , 1.5450849718747375 )
( -5.0 , 9.184850993605148e-16 )--( -7.5 , 6.123233995736766e-16 )
( -4.755282581475768 , -2.3176274578121046 )--( -7.132923872213652 , -1.5450849718747364 )
( -4.045084971874737 , -4.408389392193548 )--( -6.067627457812106 , -2.938926261462365 )
( -2.9389262614623664 , -6.0676274578121046 )--( -4.408389392193549 , -4.045084971874736 )
( -1.5450849718747377 , -7.132923872213651 )--( -2.3176274578121068 , -4.755282581475767 )
( -9.184850993605148e-16 , -7.5 )--( -1.3777276490407724e-15 , -5.0 )
( 1.5450849718747361 , -7.132923872213652 )--( 2.317627457812104 , -4.755282581475768 )
( 2.9389262614623646 , -6.067627457812107 )--( 4.408389392193547 , -4.045084971874738 )
( 4.045084971874736 , -4.408389392193549 )--( 6.0676274578121046 , -2.9389262614623664 )
( 4.755282581475767 , -2.317627457812107 )--( 7.132923872213651 , -1.5450849718747381 );
\draw[blue]
( -5.0 , 0.0 )--( 7.5 , -0.0 )
( -4.755282581475767 , 2.3176274578121054 )--( 7.132923872213651 , -1.545084971874737 )
( -4.045084971874737 , 4.408389392193548 )--( 6.067627457812106 , -2.938926261462366 )
( -2.9389262614623664 , 6.0676274578121046 )--( 4.408389392193549 , -4.045084971874736 )
( -1.5450849718747373 , 7.132923872213651 )--( 2.317627457812106 , -4.755282581475767 )
( -3.061616997868383e-16 , 7.5 )--( 4.592425496802574e-16 , -5.0 )
( 1.5450849718747368 , 7.132923872213652 )--( -2.317627457812105 , -4.755282581475768 )
( 2.938926261462365 , 6.067627457812106 )--( -4.408389392193548 , -4.045084971874737 )
( 4.045084971874736 , 4.408389392193549 )--( -6.0676274578121046 , -2.9389262614623664 )
( 4.755282581475767 , 2.3176274578121063 )--( -7.132923872213651 , -1.5450849718747375 )
( 5.0 , 9.184850993605148e-16 )--( -7.5 , -6.123233995736766e-16 )
( 4.755282581475768 , -2.3176274578121046 )--( -7.132923872213652 , 1.5450849718747364 )
( 4.045084971874737 , -4.408389392193548 )--( -6.067627457812106 , 2.938926261462365 )
( 2.9389262614623664 , -6.0676274578121046 )--( -4.408389392193549 , 4.045084971874736 )
( 1.5450849718747377 , -7.132923872213651 )--( -2.3176274578121068 , 4.755282581475767 )
( 9.184850993605148e-16 , -7.5 )--( -1.3777276490407724e-15 , 5.0 )
( -1.5450849718747361 , -7.132923872213652 )--( 2.317627457812104 , 4.755282581475768 )
( -2.9389262614623646 , -6.067627457812107 )--( 4.408389392193547 , 4.045084971874738 )
( -4.045084971874736 , -4.408389392193549 )--( 6.0676274578121046 , 2.9389262614623664 )
( -4.755282581475767 , -2.317627457812107 )--( 7.132923872213651 , 1.5450849718747381 );

\draw[dashed]
( -4.045084971874737 , 4.408389392193548 )-- ( -4.045084971874737 , -4.408389392193548 )
( 6.067627457812106 , -2.938926261462366 )--( -6.067627457812106 , -2.938926261462365 );

\filldraw
( -4.045084971874737 , 4.408389392193548 ) circle (20/3 pt) 
( -4.045084971874737 , -4.408389392193548 ) circle (20/3 pt)
( 6.067627457812106 , -2.938926261462366 ) circle (20/3 pt)
( -6.067627457812106 , -2.938926261462365 ) circle (20/3 pt);



\begin{scope}[xshift = 220mm, scale = 2]
\filldraw[blue]

( 0 , 0 )--( 3 , 0 )
( 0.0625 , 0.75 )--( 3 , -0.5 )
( 0.25 , 1.5 )--( 3 , -1.0 )
( 0.5625 , 2.25 )--( 3 , -1.5 )
( 1.0 , 3.0 )--( 3 , -2.0 )
( 1.5625 , 3.75 )--( 3 , -2.5 )
( 2.25 , 4.5 )--( 3 , -3.0 )
( 0.0625 , -0.75 )--( 3 , 0.5 )
( 0.25 , -1.5 )--( 3 , 1.0 )
( 0.5625 , -2.25 )--( 3 , 1.5 )
( 1.0 , -3.0 )--( 3 , 2.0 )
( 1.5625 , -3.75 )--( 3 , 2.5 )
( 2.25 , -4.5 )--( 3 , 3.0 );
\filldraw[red]
( 0.0225 , 0.44999999999999996 )--( -3 , 0.3 )
( 0.16000000000000003 , 1.2000000000000002 )--( -3 , 0.8 )
( 0.42250000000000004 , 1.9500000000000002 )--( -3 , 1.3 )
( 0.81 , 2.7 )--( -3 , 1.8 )
( 1.3224999999999998 , 3.4499999999999997 )--( -3 , 2.3 )
( 1.9599999999999997 , 4.199999999999999 )--( -3 , 2.8 )
( 0.0225 , -0.44999999999999996 )--( -3 , -0.3 )
( 0.16000000000000003 , -1.2000000000000002 )--( -3 , -0.8 )
( 0.42250000000000004 , -1.9500000000000002 )--( -3 , -1.3 )
( 0.81 , -2.7 )--( -3 , -1.8 )
( 1.3224999999999998 , -3.4499999999999997 )--( -3 , -2.3 )
( 1.9599999999999997 , -4.199999999999999 )--( -3 , -2.8 );

\draw[dashed]
( 1.3224999999999998 , -3.4499999999999997 )--( 0.25 , 1.5 )
( -3 , -2.3 )--( 3 , -1.0 );

\filldraw
( 1.3224999999999998 , -3.4499999999999997 ) circle (10/3 pt)
( 0.25 , 1.5 ) circle (10/3 pt)
( -3 , -2.3 ) circle (10/3 pt)
( 3 , -1.0 ) circle (10/3 pt);

\end{scope}

\end{tikzpicture}
\end{center}
\caption{Two configurations of intervals with many pairs having endpoints on two perpendicular lines.}
\label{perppull}
\end{figure}

One can also adapt the method to treat a generalisation of the trapezoids problem considered above. Two intervals form a trapezoid if two of the edges of their convex hull are parallel, but our proof did not rely in an important way on this parallel property. Indeed by mapping the interval $(a,b;c,d)$ to the line
\begin{equation*}\label{L} \mathcal{L^{\rho}}(a,b;c,d) = \left \{ \begin{pmatrix} b \\ d \\ 0  \end{pmatrix} + t\begin{pmatrix} a \\ \rho c \\ 1  \end{pmatrix} \colon t \in \mathbb{R} \right\} \subset \mathbb{R}^3,\end{equation*}
one obtains a correspondence under which a pair of lines $\mathcal{L^{\rho}}(a,b;c,d)$ and $\mathcal{L^{\rho}}(a',b';c',d')$ intersect precisely when
\begin{equation*}(a-a')(d-d')=\rho(b-b')(c-c'),\end{equation*}
i.e. the slopes formed by the endpoints of the intervals have ratio $\rho$. Thus, the arguments from Section 2 can now be applied, leading to an analogue of Theorem~\ref{maint} in the case of a fixed ratio $\rho$.

\section{Acknowledgements} The research of the first author was supported in part by a Four Year Doctoral Fellowship from the University of British Columbia. The research of the second author was supported in part by an NSERC Discovery grant and OTKA K 119528 grant. The work of the second author was also supported by the European Research Council (ERC) under the European Union's Horizon 2020 research and innovation programme (grant agreement No. 741420, 617747, 648017). The research of the third author was supported in part by Killam and NSERC doctoral scholarships.


\begin{thebibliography}{1}

\bibitem{CE} B. Chazelle, H. Edelsbrunner, An optimal algorithm for intersecting line segments in the plane, \emph{J. ACM} \textbf{39}(1) (1992), 1-54

\bibitem{GK} L. Guth, N. H. Katz, On the Erd\H{o}s distinct distances problem in the plane, \emph{Ann. of Math.} (2) \textbf{181}(1) (2015), 155-190


\bibitem{J} M. Josefsson, Characterizations of Orthodiagonal Quadrilaterals, \emph{Forum Geom.}, \textbf{12} (2012), 13-25

\bibitem{PS1} J. Pach, M. Sharir, Combinatorial geometry and its algorithmic applications: The Alcal\'a lectures (No. 152), \emph{Math. Surveys Monogr.} American Mathematical Society,
Providence, RI (2009)

\bibitem{PS2} J. Pach, M. Sharir, On vertical visibility in arrangements of segments and the queue size in the Bentley-Ottmann line sweeping algorithm, \emph{SIAM J. Comput.} \textbf{20}(3) (1991), 460-470

\bibitem{T} Cs. D. T\'oth, Binary space partitions for line segments with a limited number of directions, \emph{SIAM J. Comput.} \textbf{32}(2) (2003), 307-325

\bibitem{WS} A. Wiernik, M. Sharir, Planar realizations of nonlinear Davenport-Schinzel sequences by segments, \emph{Discrete Comput. Geom.} \textbf{3} (1988), 15-47

\end{thebibliography}
\end{document}